\documentclass[10pt]{amsart}
\input epsf

\usepackage{amsfonts,amsthm,amsmath,amssymb,latexsym}
\usepackage{graphicx,color}
\usepackage[all]{xy}
\usepackage{wrapfig}
\usepackage{fullpage}

\address{HH:Department of Mathematics, Department of Mathematics, Kansas State University. Manhattan, KS 66502}
\email{hakobyan@math.ksu.edu}

\address{DS:Department of Mathematics, Queens College of CUNY,
65-30 Kissena Blvd., Flushing, NY 11367}
\email{Dragomir.Saric@qc.cuny.edu}

\address{DS:Mathematics PhD. Program, The CUNY Graduate Center, 365 Fifth Avenue, New York, NY 10016-4309}

\theoremstyle{definition}

 \newtheorem{definition}{Definition}[section]
 \newtheorem{remark}[definition]{Remark}

\theoremstyle{plain}

 \newtheorem{proposition}[definition]{Proposition}
 \newtheorem{theorem}[definition]{Theorem}
 \newtheorem{corollary}[definition]{Corollary}
 \newtheorem{lemma}[definition]{Lemma}

\newcommand{\eps}{\varepsilon}
\newcommand{\G}{\Gamma}
\newcommand{\g}{\gamma}
\newcommand{\D}{\Delta}

\renewcommand{\d}{\delta}
\newcommand{\dist}{\mathrm{dist}}
\newcommand{\m}{\mathrm{mod}}
\renewcommand{\O}{\Omega}
\newcommand{\diam}{\mathrm{diam}}

\title[Limits of geodesics in the Universal Teichm\"uller space]{Limits of Teichm\"uller geodesics in the Universal Teichm\"uller space}
\author{Hrant Hakobyan and Dragomir \v Sari\' c}
\thanks{The second author was partially supported by National Science Foundation grant DMS 1102440 and by the Simons Foundation grant.}

\begin{document}

\subjclass{}

\keywords{}
\date{\today}

\begin{abstract}
Thurston's boundary to the universal Teichm\"uller space $T(\mathbb{H})$ is
the set of asymptotic rays to the embedding of $T(\mathbb{H})$ in the space
of geodesic currents; the boundary is identified with the projective
bounded measured laminations $PML_{bdd}(\mathbb{H})$ of $\mathbb{H}$. We
prove that each Teichm\"uller geodesic ray in $T(\mathbb{H})$ has a unique
limit point in Thurston's boundary to $T(\mathbb{H})$ unlike in the
case of closed surfaces.
\end{abstract}

\maketitle

\section{Introduction}

The Teichm\"uller space $T(\mathbb{D})$ of the unit disk $\mathbb{D}$, called
the {\it universal} Teichm\" uller space, consists of all quasisymmetric maps
$h:\mathbb{S}^1\to \mathbb{S}^1$ which fix $1$, $i$ and $-1$ on the unit
circle $\mathbb{S}^1$ (cf. \cite{GL}). The Teichm\" uller space of an
arbitrary hyperbolic surface embeds in $T(\mathbb{D})$ as a complex Banach
submanifold. Thurston's boundary to the universal Teichm\"uller space
$T(\mathbb{D})$ is the space of projective bounded measured laminations
$PML_{bdd}(\mathbb{D})$ of $\mathbb{D}$ (cf. \cite{Sa2}, \cite{Sa3}). We
study the limits of Teichm\"uller geodesic rays on Thurston's boundary to
$T(\mathbb{D})$.

Bonahon \cite{Bon1} defined an embedding of the Teichm\"uller space $T(S)$ of
a closed surface $S$ of genus at least two into the space of geodesic
currents (equipped with the weak* topology). The space of asymptotic rays to
the image of the embedding of $T(S)$ is identified with the space of
projective measured lamination of $S$-Thurston's boundary to $T(S)$. The
universal Teichm\" uller space $T(\mathbb{D})$ embeds into the space of
geodesic currents of $\mathbb{D}$ when geodesic currents are equipped with
the {\it uniform} weak* topology (cf. \cite{Sa2}, \cite{MiSar}, \cite{Sa1})
and this embedding is real analytic (cf. Otal \cite{Ot}). The image of
$T(\mathbb{D})$ in the space of geodesic currents is closed and unbounded,
and the space of asymptotic rays to the image of $T(\mathbb{D})$-Thurston's
boundary to $T(\mathbb{D})$- is identified with the projective bounded
measured laminations $PML_{bdd}(\mathbb{D})$ (cf. \cite{Sa2}, \cite{Sa1}). In
particular, the earthquake paths $t\mapsto E^{t\mu}|_{\mathbb{S}^1}$ as
$t\to\infty$ accumulate to their corresponding projective earthquake measures
$[\mu ]\in PML_{bdd}(\mathbb{D})$ in the uniform weak* topology (cf.
\cite{Sa2}, \cite{Sa1}). The construction of Thurston's boundary works for
all hyperbolic surfaces simultaneously since any invariance under a Fuchsian
group is preserved under the construction.

In the case of closed surfaces, Masur \cite{Mas} proved that the
Teichm\"uller geodesic rays obtained by shrinking the vertical trajectories of
holomorphic quadratic differentials with uniquely ergodic vertical foliations
converge to the projective classes of their vertical foliations in
Thurston's boundary. On the other hand, if the vertical foliation of a
holomorphic quadratic differential consists of finitely many cylinders then
the limit of the Teichm\"uller geodesic on Thurston's boundary is the projective class of the measured
lamination supported on finitely many simple closed geodesics homotopic  to
the cylinders with equal weights (cf. \cite{Mas}).
 In both cases the Teichm\"uller geodesic rays have unique endpoints on Thurston's boundary and the endpoints depend only on the vertical foliations. However, when the vertical foliations of holomorphic quadratic differentials on closed surfaces are not uniquely ergodic then the limit sets of the corresponding Teichm\"uller rays consist of more than one point (cf. Lenzhen \cite{Len}, Leininger-Lenzhen-Rafi \cite{LLR}, and Chaika-Masur-Wolf \cite{CMW}).

 We consider the limits of Teichm\"uller geodesic rays in the universal Teichm\"uller space $T(\mathbb{D})$ corresponding to integrable holomorphic quadratic differentials. In our previous work we showed that when a holomorphic quadratic differential has no zeroes in $\mathbb{D}$, and the natural parameter maps the unit disk onto a domain in $\mathbb{C}$ between the graphs of two functions, then the Teichm\"uller geodesic ray has a unique endpoint on Thurston's boundary of $T(\mathbb{D})$, but the endpoint depends on both vertical and horizontal foliations of $\varphi$ (cf. \cite{HaSar}). We extend this result to all integrable holomorphic quadratic differentials on the unit disk $\mathbb{D}$.

The hyperbolic plane is identified with the unit disk $\mathbb{D}$ and the
visual boundary of the hyperbolic plane is identified with the unit circle
$\mathbb{S}^1$. A (hyperbolic) geodesic in $\mathbb{D}$ is uniquely
determined by it endpoints; the space of geodesics of $\mathbb{D}$ is
identified with $\mathbb{S}^1\times \mathbb{S}^1-diag$.
 Let $\varphi$ be an arbitrary integrable holomorphic quadratic differential on the unit disk $\mathbb{D}$.
 Each vertical trajectory
 of $\varphi$ has two distinct endpoints on the boundary circle $\mathbb{S}^1$ of the unit disk $\mathbb{D}$ (cf. \cite{Str}).
 Thus each vertical trajectory of $\varphi$ is homotopic to a unique geodesic of $\mathbb{D}$ relative ideal endpoints on $\mathbb{S}^1$. Let $v_{\varphi}$ be the set of the geodesics in $\mathbb{D}$ homotopic to the vertical trajectories of $\varphi$ (cf. Figure 1).
Given a box of geodesics $[a,b]\times [c,d]\subset \mathbb{S}^1\times
\mathbb{S}^1-diag$, denote by $I_{[a,b]\times [c,d]}$ any (at most countable)
union of sub-arcs of horizontal trajectories that intersects exactly once
each vertical trajectory of $\varphi$ with one endpoint in $[a,b]$ and the
other endpoint in $[c,d]$, and that does not intersect any other vertical
trajectories of $\varphi$.

\begin{figure}
\noindent\makebox[\textwidth]{
\includegraphics[width=12 cm]{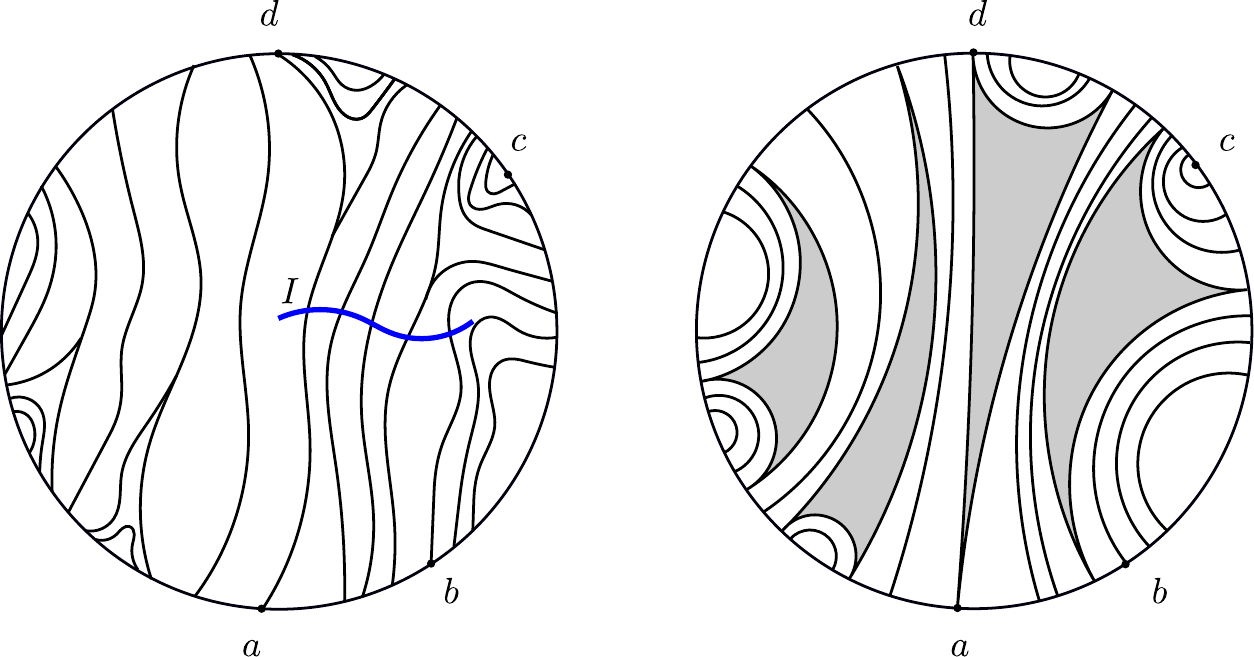}}
\caption{The vertical foliation of $\varphi$ and the correspondingg geodesic lamination $v_{\varphi}$ The measure $\mu_{\varphi}([a,b]\times [c,d])$ is obtained by integration $\int_I\frac{1}{l^{\varphi}(z)}|\sqrt{\varphi (z)}dz|$.}
\end{figure}

 We define a measured lamination $\mu_{\varphi}$ of $\mathbb{D}$ supported on $v_{\varphi}$ by
\begin{align}\label{def:lamination}
 \mu_{\varphi}([a,b]\times [c,d])=\int_{I_{[a,b]\times [c,d]}}\frac{1}{l(x)}dx,
\end{align}
where $l(x)$ is the $\varphi$-length (i.e. the length induced by $\int |\sqrt{\varphi (z)}dz|$) of a vertical trajectory through $x\in
I_{[a,b]\times [c,d]}$ and the integration is in the natural parameter of
$\varphi$.
 We obtain (cf. Proposition \ref{prop:common_endpoints} and proof of Theorem \ref{thm:weak*convergence})

 \vskip .2 cm

\noindent {\bf Proposition 1.} {\it Let $\mu_{\varphi}$ be the measured
lamination homotopic to the vertical foliation of an integrable holomorphic
quadratic differential $\varphi$ on $\mathbb{D}$ defined by the above integration. Then
$$
\|\mu_{\varphi}\|_{Th}=\sup_{[a,b]\times [c,d]}\mu_{\varphi}([a,b]\times [c,d])<\infty
$$
where the supremum is over all boxes of geodesics $[a,b]\times [c,d]\subset
\mathbb{S}^1\times \mathbb{S}^1-diag$ with $cr(a,b,c,d)=2$.

The measured lamination $\mu_{\varphi}$ satisfies
$$
\mu_{\varphi}(\{ a\}\times [c,d])=0
$$
for all $a\in \mathbb{S}^1$ and $[c,d]\subset \mathbb{S}^1$, and in
particular $\mu_{\varphi}$ does not have atoms. }

\vskip .2 cm

For $\epsilon >0$, let $T_{\epsilon}$ be the Teichm\"uller mapping that
shrinks the vertical trajectories of $\varphi$ by a multiplicative constant $\epsilon$.
The {\it Teichm\"uller geodesic ray} $\epsilon \mapsto T_{\epsilon}$ as $\epsilon \to 0^{+}$ leaves every
bounded subset of the universal Teichm\"uller space $T(\mathbb{D})$. We
obtain (cf. Theorem \ref{thm:weak*convergence})

\vskip .2 cm

\noindent {\bf Theorem 1.} {\it Let
$$
\epsilon\mapsto T_{\epsilon}
$$
be the Teichm\"uller geodesic ray in $T(\mathbb{D})$ that shrinks the vertical
trajectories  of an integrable holomorphic quadratic differential $\varphi$
by a multiplicative constant $\epsilon >0$. Then
$$
T_{\epsilon}\to [\mu_{\varphi}]\in PML_{bdd}(\mathbb{D})
$$ as $\epsilon\to 0^{+}$ in Thurston's closure $T(\mathbb{D})\cup PML_{bdd}(\mathbb{D})$ of $T(\mathbb{D})$, where $\mu_{\varphi}$ is the measured lamination defined by equation (\ref{def:lamination}) and the convergence is in the weak* topology on geodesic currents.

In particular, the limit set of any Teichm\"uller ray in $T(\mathbb{D})$
consists of a unique point. }

\vskip .2 cm

\noindent {\bf Remark 1.} The limit point $\mu_{\varphi}$ depends on the
vertical foliation and on the lengths of the vertical trajectories unlike for
closed surfaces. The lengths of vertical trajectories are given by the
transverse measure to the horizontal foliation. Therefore the limit point
depends on both vertical and horizontal foliations of $\varphi$ which is a new phenomenon that does not appear for closed surfaces.

\vskip .2 cm

\noindent {\bf Remark 2.} The measure $\mu_{\varphi}([a,b]\times [c,d])$ is in fact
the conformal modulus of the family of vertical trajectories of $\varphi$ with one endpoint in
$[a,b]$ and another endpoint in $[c,d]$ (cf. Proposition
\ref{prop:vertical_mod_measure}).

\vskip .2 cm

\noindent {\bf Remark 3.} The above theorem is motivated by the results of Masur \cite{Mas} in the case of a closed surface. The major difference in this work is that the hyperbolic plane has no closed geodesics and that the universal Teichm\"uller space is infinite dimensional non-separable Banach manifold. The convergence questions that arise in this setup and the methods applied are of a more analytic nature than for closed surfaces. Moreover, we emphasise the existence of a unique limit point for any Teichm\"uller ray in $T(\mathbb{D})$ which is not true for the Teichm\"uller spaces of closed surfaces.

\vskip .2 cm

The convergence of Teichm\"uller geodesic rays is in the weak* topology while the convergence of earthquake paths is in the {\it uniform} weak* topology. We prove in \S \ref{sec:ce}

\vskip .2 cm

\noindent {\bf Proposition 2.}
{\it There exists an integrable holomorphic quadratic differential $\varphi$ on the unit disk $\mathbb{D}$
such that the corresponding Teichm\"uller geodesic ray does not converge in the {\it uniform} weak* topology.}

\vskip .2 cm

\noindent {\bf Remark 4.} Thus while any Teichm\"uller geodesic ray in $T(\mathbb{D})$ converges in the weak* topology, there exist Teichm\"uller rays that do not converge in the strongest possible sense (the uniform weak* topology) in $T(\mathbb{D})\cup PML_{bdd}(\mathbb{D})$ unlike earthquake paths. Also note that the uniform weak* topology and the weak* topology agree on the space of geodesic currents of a closed surface.

\vskip .2 cm

Denote by $A(\mathbb{D})$ the space of all integrable holomorpic quadratic differentials on the unit disk $\mathbb{D}$. Let $PA(\mathbb{D})=(A(\mathbb{D})-\{ 0\})/\sim$, where $\varphi\sim\varphi_1$ if there exists $c>0$ with $\varphi =c\varphi_1$. By definition, we have $\mu_{c\varphi}=\mu_{\varphi}$ for any $c>0$. Therefore we obtained a map
$$
\mathcal{M} :PA(\mathbb{D})\to PML_{bdd}(\mathbb{D})
$$
given by
$$
\mathcal{M}([\varphi ])=[\mu_{\varphi}],
$$
where $[\varphi ]$ and $[\mu_{\varphi}]$ are the projective classes of $\varphi$ and $\mu_{\varphi}$, respectively.

\vskip .2 cm

\noindent {\bf Theorem 2.} {\it The map
$$
\mathcal{M} :PA(\mathbb{D})\to PML_{bdd}(\mathbb{D});\ \mathcal{M}:[\varphi ]\mapsto [\mu_{\varphi}]
$$
is injective.}

\vskip .2 cm

\noindent {\bf Remark 5.} By Theorem 1 and Theorem 2, two different  Teichm\"uller geodesic rays in $T(\mathbb{D})$ starting at the basepoint of $T(\mathbb{D})$ converge to different points in Thurston's boundary. On the other hand, Masur \cite{Mas} proved that two Teichm\"uller geodesic rays corresponding to two holomorphic quadratic differentials whose vertical foliations decompose a compact surface into finitely many cylinders of the same topological type but different relative heights converge to the same point in Thurston's boundary.

\vskip .2 cm

For $\varphi\in A(\mathbb{D})$, denote by $\nu_{\varphi}$ the measured lamination  whose support is the geodesic lamination $v_{\varphi}$ homotopic to the vertical foliation of $\varphi$ and whose transverse measure is induced by the transverse measure to the vertical foliation induced by $\varphi$. We recover the integral of $|\varphi |$ using $\mu_{\varphi}$ and $\nu_{\varphi}$, namely
$$
\|\varphi\|_{L^1}=\int_{\mathbb{S}^1\times \mathbb{S}^1-diag}\frac{d\nu_{\varphi}}{d\mu_{\varphi}}d\nu_{\varphi}.
$$

\vskip .2 cm

\noindent {\bf Theorem 3.} {\it The map
$$
A(\mathbb{D})\to ML_{bdd}(\mathbb{D})\times ML_{bdd}(\mathbb{D})
$$
defined by
$$
\varphi\mapsto (\nu_{\varphi},\mu_{\varphi})
$$
is injective.}

\vskip .2 cm

The paper is organized as follows. In \S 2 we define the universal Teichm\"uller space $T(\mathbb{D})$, the space of geodesic currents, the Liouville current and Thurston's boundary to $T(\mathbb{D})$.
In \S 3 we define modulus of a family of curves and find a relationship between the modulus and relative distance. Finally we give asymptotic relationship between the modulus and the Liouville current which is fundamental to our work. In \S 4 we study the limits of Teichm\"uller geodesic rays and prove Theorem 1. In \S 5 we give a counter-example to uniform weak* convergence of Teichm\"uller geodesic rays. In \S 6 we study the relationship between the integrable holomorphic quadratic differentials and two measured laminations homotopic to the vertical foliation.

\vskip .2 cm

{\it Acknowledgements.} We would like to thank Mr. Huiping Pan for his useful comments.

\section{Thurston's boundary via geodesic currents}

We identify the unit disk $\mathbb{D}$ with the hyperbolic plane; the visual
boundary to $\mathbb{D}$ is the unit circle $\mathbb{S}^1$. A homeomorphism
$h:\mathbb{S}^1\to \mathbb{S}^1$ is said to be {\it quasisymmetric} if there
exists $M\geq 1$ such that
$$
\frac{1}{M}\leq\frac{|h(I)|}{|h(J)|}\leq M
$$
for all circular arcs $I,J$ with a common boundary point and disjoint
interiors such that $|I|=|J|$, where $|I|$ is the length of $I$. A
homeomorphism is quasisymmetric if and only if it extends to a quasiconformal
map of the unit disk, see e.g. \cite{LV}.

\begin{definition}
The universal Teichm\"uller space $T(\mathbb{D})$ consists of all
quasisymmetric maps $h:\mathbb{S}^1\to \mathbb{S}^1$ that fix $-i,1,i\in
\mathbb{S}^1$.
\end{definition}

If $g:\mathbb{D}\to\mathbb{D}$ is a quasiconformal map, denote by $K(g)$ its
quasiconformal constant. The Teichm\"uller metric on $T(\mathbb{D})$ is given
by $d(h_1,h_2)=\inf_g \log K(g)$, where $g$ runs over all quasiconformal
extensions of the quasisymmetric map $h_1\circ h_2^{-1}$. The Teichm\"uller
topology is induced by the Teichm\"uller metric.

Bonahon's approach \cite{Bon1} to Thurston's boundary of the Teichm\"uller
space $T(S)$ of a closed surface $S$ is to embed $T(S)$ into the space of
geodesic currents on $S$. A {\it geodesic current} on $S$ is a positive Borel
measure on the space of geodesics $\mathbb{S}^1\times \mathbb{S}^1- diag$ of
the universal covering $\mathbb{D}$ of $S$ that is invariant under the action
of the covering group $\pi_1(S)$. Each point in the Teichm\"uller space
$T(S)$ is a quasisymmetric map
$$
h:\mathbb{S}^1\to \mathbb{S}^1
$$
that conjugates the covering Fuchsian group $\pi_1(S)$ onto another Fuchsian
group.

 The {\it Liouville measure} $\mathcal{L}$ on the space of geodesic of $\mathbb{D}$ is given by
 $$
 \mathcal{L}(A)=\int_A \frac{d\alpha d\beta}{|e^{i\alpha}-e^{i\beta}|^2}
 $$
 for any Borel set $A\subset \mathbb{S}^1\times \mathbb{S}^1-diag$. If $A=[a,b]\times [c,d]$ is a {\it box of geodesics} then
 $$
 \mathcal{L}([a,b]\times [c,d])=\log\frac{(a-c)(b-d)}{(a-d)(b-c)}.
 $$

To each quasisymmetric map $h:\mathbb{S}^1\to \mathbb{S}^1$ that conjugates
$\pi_1(S)$ onto another Fuchsian group, we assign the pull back
$h^{*}(\mathcal{L})$ of the Liouville measure. This assignment is a
homeomorphism of $T(S)$ onto its image in the space of geodesic currents for
$S$ when equipped with the weak* topology (cf. \cite{Bon1}). The set of the
asymptotic rays to the image of $T(S)$ is identified with the projective
measured laminations on $S$-the Thurston's boundary of $T(S)$ (cf.
\cite{Bon1}).

The universal Teichm\"uller space $T(\mathbb{D})$ maps into the space of geodesic currents
by taking the pull backs by quasisymmetric maps of the Liouville measure.
There is no invariance condition on the quasisymmetric maps or on the pull
backs of the Liouville measure. A geodesic current $\alpha$ is {\it bounded}
if
$$
\sup_{[a,b]\times [c,d]}\alpha ([a,b]\times [c,d])<\infty
$$
where the supremum is over all boxes of geodesics $[a,b]\times [c,d]$ with
$\mathcal{L}([a,b]\times [c,d])=\log 2$. The pull backs $h^{*}(\mathcal{L})$
for $h$ quasisymmetric are bounded geodesic currents.

The space of bounded geodesic currents is endowed with the family of H\"older
norms parametrized with the H\"older exponents $0<\nu\leq 1$ (cf.
\cite{Sa2}). The pull backs of the Liouville measure define a homeomorphism
of $T(\mathbb{D})$ onto its image in the bounded geodesic currents; the
homeomorphism is differentiable with a bounded derivative  (cf. \cite{Sa4}) and, in fact, Otal \cite{Ot} proved that the embedding is
real-analytic. The asymptotic rays to the image of $T(\mathbb{D})$  are
identified with the space of projective bounded measured laminations (cf.
\cite{Sa2}). Thus Thurston's boundary of $T(\mathbb{D})$ is the space
$PML_{bdd}(\mathbb{D})$ of all projective bounded measured laminations on
$\mathbb{D}$ (and an analogous statement holds for any hyperbolic Riemann
surface). Alternatively, the space of geodesic currents can be endowed with
the uniform weak* topology (for definition cf. \cite{MiSar}) and Thurston's boundary for
$T(\mathbb{D})$ is again $PML_{bdd}(\mathbb{D})$ (cf. \cite{Sa3}).

\section{The asymptotics of the modulus}\label{section:background}

Let $\G$ be a family of rectfiable curves in $\mathbb{C}$. An {\it admissible metric} $\rho$ for
$\Gamma$ is a non-negative Borel measurable function on
$\mathbb{D}$ such that the $\rho$-length of each
$\gamma\in\Gamma$ is at least one, namely
$$
l_{\rho}(\gamma )=\int_{\gamma}\rho (z)|dz|\geq 1.
$$

The {\it modulus} $\m(\Gamma)$ of the family
$\Gamma$ is given by
$$
\m(\Gamma)=\inf_{\rho}\int_{\mathbb{D}}\rho(z)^2dxdy
$$
where the infimum is over all admissible metrics $\rho$.

We will mostly be interested in estimating moduli of families of curves in a
domain $\Omega\subset\mathbb{C}$ connecting two subsets of the boundary of
$\O$. Thus, given $E,F\subset\partial\O$ we denote $(E,F;\O)$ the family of
rectifiable curves $\g$ having one endpoint  in $E$ and the other endpoint in
$F$. When $\O$ is the unid disc $\mathbb{D}$ and $(a,b,c,d)$ is a quadruple
of distinct points on the boundary circle $\mathbb{S}^1$ given in the
counterclockwise order we denote
$$\G_{[a,b]\times[c,d]} = ((a,b),(c,d);\mathbb{D}).$$
%
%If the domain $\O$ is clear from the context, we will suppress it from the
%notation and just write $\G_{E,F}$ instead of $(E,F;\O)$.
%
%Let $(a,b,c,d)$ be a quadruple of distinct points on $\mathbb{S}^1$ given in the
%counterclockwise order. Denote by $\Gamma_{[a,b]\times [c,d]}$ the family of
%all differentiable curves whose interiors are in $\mathbb{D}$ that have one
%endpoint on the arc $[a,b]\subset \mathbb{S}^1$ and the other endpoint on the arc
%$[c,d]\in \mathbb{S}^1$.

Lemma \ref{lemma:modproperties} below, summarizes some of the main properties
of the modulus, which we will use repeatedly throughout the paper. We refer
the reader to \cite{GM,LV,Vaisala:lectures} for the proofs of these
properties below and for further background on modulus.

If $\G_1$ and $\G_2$ are curve families in $\mathbb{C}$, we will say that
$\G_1$ \emph{overflows} $\G_2$ and will write $\G_1>\G_2$ if every curve
$\g_1\in \G_1$ contains some curve $\g_2\in \G_2$.

\begin{lemma} \label{lemma:modproperties}
Let $\G_1,\G_2,\ldots$ be curve families in $\mathbb{C}$. Then
\begin{itemize}
  \item[1.] \textsc{Monotonicity:} If $\G_1\subset\G_2$ then $\m(\G_1)\leq
      \m(\G_2)$.
  \item[2.] \textsc{Subadditivity:} $\m(\bigcup_{i=1}^{\infty} \G_i) \leq
      \sum_{i=1}^{\infty}\m(\G_i).$
  \item[3.] \textsc{Overflowing:} If $\G_1<\G_2$ then $\m \G_1 \geq \m
      \G_2$.
\end{itemize}
\end{lemma}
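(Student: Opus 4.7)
The three properties are all essentially formal consequences of the definition of modulus, and the plan is to deduce each by identifying a containment between the corresponding sets of admissible metrics. The conceptual observation driving all three proofs is: if every curve in $\Gamma$ picks up at least unit mass in $\rho$, then any operation on $\Gamma$ that ``only makes curves longer'' or ``passes to a sub-family'' preserves admissibility. So the game is purely bookkeeping on admissible metrics and uses no geometry beyond the definition.

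For monotonicity, I would observe that if $\Gamma_1 \subset \Gamma_2$, then any $\rho$ admissible for $\Gamma_2$ is automatically admissible for $\Gamma_1$, since the length condition $l_\rho(\gamma)\geq 1$ is required on the larger family and hence on the smaller one. The set of admissible metrics for $\Gamma_1$ therefore contains that for $\Gamma_2$, and taking the infimum of $\int \rho^2\,dxdy$ over a larger set can only decrease it, giving $\m(\Gamma_1)\leq\m(\Gamma_2)$.

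For subadditivity, given $\eps>0$, I would pick for each $i$ an admissible metric $\rho_i$ for $\Gamma_i$ with $\int \rho_i^2\,dxdy \leq \m(\Gamma_i)+\eps/2^i$, and then take the $\ell^2$-combination $\rho:=\bigl(\sum_i \rho_i^2\bigr)^{1/2}$. For any $\gamma\in\bigcup_i\Gamma_i$ we have $\gamma\in\Gamma_{i_0}$ for some $i_0$, so $\int_\gamma \rho\,|dz|\geq \int_\gamma \rho_{i_0}\,|dz|\geq 1$, making $\rho$ admissible for $\bigcup_i\Gamma_i$. Then $\int \rho^2\,dxdy = \sum_i \int \rho_i^2\,dxdy \leq \sum_i \m(\Gamma_i)+\eps$, and letting $\eps\to 0$ yields the subadditivity estimate. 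The only mild subtlety to double-check is that $\sum_i \rho_i^2$ is genuinely a non-negative Borel function, but this is immediate from the Borel measurability of each $\rho_i$.

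For the overflowing property, the translation of $\Gamma_1<\Gamma_2$ is that every curve $\gamma_2\in\Gamma_2$ contains some $\gamma_1\in\Gamma_1$. Given any $\rho$ admissible for $\Gamma_1$, I would note that for each $\gamma_2\in\Gamma_2$ one can pick a $\gamma_1\subset\gamma_2$ with $\gamma_1\in\Gamma_1$, so that $\int_{\gamma_2}\rho\,|dz|\geq\int_{\gamma_1}\rho\,|dz|\geq 1$. Thus $\rho$ is also admissible for $\Gamma_2$, giving a containment of admissible sets opposite to the monotonicity case, and hence $\m(\Gamma_2)\leq\m(\Gamma_1)$. I do not expect any real obstacle here; the only thing to be careful about is that all curves involved are rectifiable so that path integrals $\int_\gamma \rho\,|dz|$ are well defined along the sub-curves obtained by restriction, which is built into the hypotheses.
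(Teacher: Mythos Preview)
Your arguments for all three parts are correct and are exactly the standard proofs one finds in the references the paper cites. Note that the paper itself does not prove this lemma at all: it states the result and refers the reader to \cite{GM,LV,Vaisala:lectures} for proofs, so there is no in-paper argument to compare against.
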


Heuristically modulus of $(E,F;\O )$ measures the amount of curves
connecting $E$ and $F$ in the $\O$. The more ``short" curves there are the
bigger the modulus is. This heuristic may be made precise using a notion of
relative distance $\D(E,F)$, which we define next.

Given two continua $E$ and $F$ in $\mathbb{C}$ we denote
\begin{align}
  \D(E,F) := \frac{\mathrm{dist}(E,F)}{\min\{\diam E, \diam F\}},
\end{align}
i.e. $\D(E,F)$ is the \textit{relative distance} between $E$ and $F$ in
$\mathbb{C}$.

\begin{lemma}\label{lemma:mod-reldist}
For every pair of continua $E,F\subset\mathbb{C}$ we have
\begin{align}\label{modest:reldistance}
\m(E,F;\mathbb{C}) \leq \pi\left(1+\frac{1}{2\D(E,F)}\right)^2.
\end{align}
\end{lemma}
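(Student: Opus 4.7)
The plan is to exhibit an admissible metric for the family $(E,F;\mathbb{C})$ supported on a thin neighbourhood of whichever of the two continua has the smaller diameter, and then to estimate its $L^{2}$ mass via the planar isodiametric inequality. Assume without loss of generality that $\diam E\le \diam F$, and set $D=\diam E$ and $d=\dist(E,F)$, so that $\D(E,F)=d/D$. Let
$$
S:=\{z\in\mathbb{C}:\dist(z,E)\le d\}=E+\overline{B(0,d)},
$$
and consider the candidate Borel function $\rho=\tfrac{1}{d}\chi_{S}$.

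First I would verify that $\rho$ is admissible for $(E,F;\mathbb{C})$. Any rectifiable $\gamma$ in this family has an initial endpoint $e\in E\subset S$ and a terminal endpoint $f\in F$. If $\gamma\subset S$, the Euclidean length of $\gamma$ is at least $|e-f|\ge \dist(E,F)=d$. Otherwise $\gamma$ exits $S$; letting $p$ be its first point on $\partial S$, the subarc from $e$ to $p$ lies in $S$ and its length is at least $|e-p|\ge \dist(p,E)=d$, the last equality being the defining property of a point of $\partial S$. In either case $l_{\rho}(\gamma)=\tfrac{1}{d}\cdot\mathrm{length}(\gamma\cap S)\ge 1$, so $\rho$ is admissible.

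Next I would estimate $|S|$. Since $S=E+\overline{B(0,d)}$, the triangle inequality yields $\diam S\le \diam E+2d=D+2d$. The planar isodiametric (Bieberbach) inequality states that any measurable set $A\subset\mathbb{R}^{2}$ satisfies $|A|\le \tfrac{\pi}{4}(\diam A)^{2}$, so
$$
|S|\le \frac{\pi}{4}(D+2d)^{2}=\pi\left(\frac{D}{2}+d\right)^{2}.
$$
Combining the two estimates,
$$
\m(E,F;\mathbb{C})\le \int_{\mathbb{C}}\rho^{2}\,dxdy=\frac{|S|}{d^{2}}\le \pi\left(1+\frac{D}{2d}\right)^{2}=\pi\left(1+\frac{1}{2\D(E,F)}\right)^{2},
$$
which is exactly inequality \eqref{modest:reldistance}.

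The admissibility step is purely elementary, reducing to the triangle inequality together with the definition of the $d$-neighbourhood. The only nontrivial input is the area bound, and there the isodiametric inequality is essential: the sharp constant $\pi/4$ in Bieberbach's inequality is precisely what produces the factor $1/(2\D)$ inside the square, whereas a cruder argument that simply encloses $E$ in a ball of radius $D$ centred at any point of $E$ would only yield the weaker bound $\pi(1+1/\D)^{2}$.
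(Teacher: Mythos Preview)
Your proof is correct and is essentially the paper's own argument. The paper phrases the admissibility step via an auxiliary overflowing family $\Gamma_E^{\delta}$ (curves starting in $E$ and reaching distance $\delta$ from $E$) and then takes the minimum over $E$ and $F$, whereas you assume $\diam E\le\diam F$ from the outset; but the admissible metric $\rho=\delta^{-1}\chi_{E^{\delta}}$ and the area estimate $\mathcal{H}^{2}(E^{\delta})\le \pi\big(\tfrac{\diam E+2\delta}{2}\big)^{2}$ (i.e.\ the isodiametric inequality, which the paper uses without naming it) are identical to yours.
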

\begin{proof}
  Let $\d:=\dist(E,F)$ and $\G_E^{\d}$ be the family of curves
  $\g\subset\mathbb{C}$ such that $\g(0)\in E$ and $\dist(\g(1),E)\geq \d$.
  Then $(E,F;\mathbb{C})\subset \G_E^{\d}$ and similarly $(E,F;\mathbb{C})\subset
  \G_F^{\d}$. Therefore,
  \begin{align}\label{modest:min}
\m(E,F;\mathbb{C})\leq \min \{\m\G_E^{\d},\m\G_F^{\d}\}.
  \end{align}
Denoting by $E^{\d}$ the $\d$-neighborhood of the set $E\subset\mathbb{C}$,
we note, that
  \begin{align*}
    \rho(z)= \d^{-1}\chi_{E^{\d}}(z)
  \end{align*}
is admissible for $\G^{\d}_E$. Therefore, we have
  \begin{align}\label{modest:reldist1}
\begin{split}\m\G^{\d}_E
&\leq\int_{E^{\d}}(\d^{-1})^2 dxdy = \d^{-2}\mathcal{H}^2(E^{\d})
\leq \d^{-2} \, \pi \left(\frac{\diam E +2\d}{2}\right)^2 \\
&= \pi \left(1+\frac{\diam E}{2\dist(E,F)}\right)^2,
%&\leq \pi \left(\frac{\diam E}{\dist(E,F)}\right)^2.
 \end{split} \end{align}
 where $\mathcal{H}^2(E^{\d})$ is the Euclidean area of $E^{\d}$.
Combining inequalities (\ref{modest:min}) and (\ref{modest:reldist1}) we
obtain (\ref{modest:reldistance}).
\end{proof}
\begin{corollary} Let $E_n$ and $F_n$, $n\in\mathbb{N},$ be a sequence of pairs of continua in
$\mathbb{C}$.
  If  the sequence $\D(E_n,F_n)$ is bounded away from $0$ then  $\m(E_n,F_n;\mathbb{C})$ is bounded.
\end{corollary}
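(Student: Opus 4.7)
The statement is an immediate consequence of the inequality established in Lemma \ref{lemma:mod-reldist}, so the plan is essentially to record the substitution and observe monotonicity of the right-hand side in $\D$.

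More precisely, by hypothesis there exists $c>0$ such that $\D(E_n,F_n)\geq c$ for every $n\in\mathbb{N}$. Applying (\ref{modest:reldistance}) with $E=E_n$, $F=F_n$ yields
\begin{equation*}
\m(E_n,F_n;\mathbb{C}) \leq \pi\left(1+\frac{1}{2\D(E_n,F_n)}\right)^2 \leq \pi\left(1+\frac{1}{2c}\right)^2,
\end{equation*}
where the second inequality uses that $x\mapsto(1+\tfrac{1}{2x})^2$ is decreasing on $(0,\infty)$. The right-hand side is a finite constant independent of $n$, so the sequence $\m(E_n,F_n;\mathbb{C})$ is bounded.

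There is essentially no obstacle here: the hard analytic work is contained in Lemma \ref{lemma:mod-reldist}, whose proof constructed the admissible metric $\rho=\d^{-1}\chi_{E^{\d}}$ to control the modulus by the relative distance. The corollary simply repackages that estimate as a uniform bound under a lower bound on the relative distance.
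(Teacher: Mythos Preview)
Your proof is correct and matches the paper's approach: the paper states the corollary immediately after Lemma~\ref{lemma:mod-reldist} without a separate proof, treating it as the obvious consequence you have written out. Your explicit substitution and use of the monotonicity of $x\mapsto (1+\tfrac{1}{2x})^2$ is exactly what the paper leaves implicit.
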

\begin{remark}
  The previous lemma is very weak for large $\D(E,F)$, since it is in fact
  easy to see that $\m(E,F,\mathbb{C})$ tends to $0$ as $\D(E,F)\to\infty$.
  But we will not need this estimate in the present paper and will refer the
  interested reader to Heinonen's book \cite{Hein} for relations between the
  modulus and relative distance.
\end{remark}
 The following lemma is an easy consequence of the asymptotic properties
of the moduli (cf. \cite{LV}).

\begin{lemma}[cf. \cite{HaSar}]
\label{lem:mod_liouville_measure} Let $(a,b,c,d)$ be a quadruple of points on
$\mathbb{S}^1$ in the counterclockwise order. Let $\Gamma_{[a,b]\times
[c,d]}$ consist of all differentiable curves $\gamma$ in $\mathbb{D}$ which
connect $[a,b]\subset \mathbb{S}^1$ with $[c,d]\subset \mathbb{S}^1$. Then
$$
\m(\Gamma_{[a,b]\times [c,d]})-\frac{1}{\pi}\mathcal{L}([a,b]\times [c,d])-\frac{2}{\pi}\log 4\to 0
$$
as $\m(\Gamma_{[a,b]\times [c,d]})\to\infty$, where $\mathcal{L}$ is the
Liouville measure.
\end{lemma}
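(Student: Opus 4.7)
The plan is to exploit conformal invariance to reduce the general quadrilateral to a one-parameter symmetric configuration, then compute both the modulus and the Liouville measure in closed form and compare their asymptotics using a classical estimate on elliptic integrals.

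First I would normalize. Both $\m(\Gamma_{[a,b]\times[c,d]})$ and $\mathcal{L}([a,b]\times[c,d])$ are M\"obius invariant — the former by conformal invariance of the modulus, the latter because $\mathcal{L}$ is the logarithm of a cross-ratio — so I may apply a M\"obius transformation carrying $\mathbb{D}$ onto the upper half-plane and sending the four points to the symmetric positions $a=-R$, $b=-1$, $c=1$, $d=R$ with $R>1$. A single parameter $k:=1/R\in(0,1)$ now controls the configuration, and the regime $\m\to\infty$ becomes $k\to 0^{+}$.

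Next, the Schwarz--Christoffel map (equivalently, $\mathrm{sn}^{-1}(\,\cdot\,;k)$) sends the upper half-plane onto a rectangle of width $2K(k)$ and height $K(k')$, carrying $\{\pm 1,\pm 1/k\}$ to its four vertices, where $K$ denotes the complete elliptic integral of the first kind and $k'=\sqrt{1-k^{2}}$. Under this conformal map $\Gamma_{[a,b]\times[c,d]}$ becomes the family of curves joining the two vertical sides of the rectangle, so
\[
\m(\Gamma_{[a,b]\times[c,d]})=\frac{K(k')}{2K(k)}.
\]
A direct cross-ratio computation gives
\[
\mathcal{L}([a,b]\times[c,d])=\log\frac{(R+1)^{2}}{4R}=\log\frac{(1+k)^{2}}{4k}.
\]

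The key input is then the classical Landen-type asymptotic (cf.\ \cite{LV})
\[
K(k)=\tfrac{\pi}{2}+O(k^{2}),\qquad K(k')=\log\frac{4}{k}+O(k^{2}\log k)\quad(k\to 0^{+}),
\]
from which $\m=\tfrac{1}{\pi}\log(4/k)+o(1)$, while an elementary expansion yields $\mathcal{L}=\log(1/k)-\log 4+O(k)$. Combining,
\[
\m-\tfrac{1}{\pi}\mathcal{L}-\tfrac{2}{\pi}\log 4=o(1)\quad\text{as }k\to 0^{+},
\]
which, since $k\to 0^{+}$ is equivalent to $\m\to\infty$, is exactly the assertion. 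The only nontrivial ingredient is the degeneration rate of $K(k')$; everything else is a routine cross-ratio computation together with the identification of the modulus of the quadrilateral with $K'/(2K)$, so I do not expect any substantive obstacle.
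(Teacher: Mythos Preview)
Your argument is correct and is precisely the route the paper implicitly points to: the paper does not actually prove this lemma but cites \cite{HaSar} and the asymptotic properties of moduli in \cite{LV}, which amounts exactly to the normalization-plus-elliptic-integral computation you carry out. The identification $\m=K(k')/(2K(k))$ and the expansion $K(k')=\log(4/k)+o(1)$, $K(k)=\pi/2+o(1)$ are the standard facts from \cite{LV}, and your cross-ratio computation and matching of constants are accurate, so nothing is missing.
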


\begin{remark} Note that simultaneously $\m(\Gamma_{[a,b]\times [c,d]})\to\infty$ and $\mathcal{L}([a,b]\times [c,d])\to\infty$.
\end{remark}

\section{The convergence of Teichm\"uller rays}

Let $\varphi$ be an integrable holomorphic quadratic differential on the unit
disk $\mathbb{D}$. In other words, $\varphi :\mathbb{D}\to\mathbb{C}$ is
holomorphic and
$$
\|\varphi\|_{L^1} =\iint_{\mathbb{D}}|\varphi (z)|dxdy<\infty .
$$

A point $z\in\mathbb{D}$ is said to be {\it regular} for $\varphi $ if $\varphi (z)\neq 0$. In a neighborhood of every regular point of $\varphi$ the parameter $w$ given by path integral $w=\int\sqrt{\varphi (z)}dz$ is called a {\it natural parameter} for $\varphi$. The holomorphic quadratic differential $\varphi (z)dz^2$ has representation $dw^2$ in the natural parameter $w$. Moreover, if $w'$ is another natural parameter then $w'=\pm w+const$ at their intersection (cf. \cite{Str}).

A {\it vertical arc} for $\varphi$ is a differentiable arc $\gamma: (a,b)\to\mathbb{D}$ that passes only through regular points of $\varphi$ and that satisfies $\varphi (\gamma (t))\gamma'(t)^2<0$ for all $t\in (a,b)$. Equivalently, a vertical arc is an inverse image of a Euclidean vertical arc in the natural parameter $w$. A {\it vertical trajectory} of $\varphi$ is a {\it maximal} vertical arc. Similarly, a {\it horizontal arc} for $\varphi$ is a differentiable arc $\gamma: (a,b)\to\mathbb{D}$ that passes through regular points and satisfies $\varphi (\gamma (t))\gamma'(t)^2>0$ for all $t\in (a,b)$ (cf. \cite{Str}).

Each end of a vertical trajectory either accumulates to a zero of $\varphi
(z)$ or to the boundary $\mathbb{S}^1$ of $\mathbb{D}$. In particular, if an
end of a vertical trajectory of $\varphi$ accumulates to the boundary
$\mathbb{S}^1$ then the limit set on $\mathbb{S}^1$ consists of a single
point and we say that the vertical trajectory has an endpoint on
$\mathbb{S}^1$ (cf. \cite{Str}). The set of zeroes of $\varphi$ is countable
and therefore only countably many vertical trajectories have an endpoint at a
zero of $\varphi$. Any vertical trajectory of $\varphi$ not in the above
countable set has two distinct endpoints on $\mathbb{S}^1$ (cf. \cite{Str}).

We define the {\it width} of a curve $\gamma$ in $\mathbb{D}$. By Strebel
\cite{Str}, the unit disk $\mathbb{D}$ can be decomposed into countably many
disjoint open strips $S(\beta_i)$ up to a countable family of
vertical trajectories, where $\beta_i$ is an open horizontal
arc and $S(\beta_i )$ is the union of vertical trajectories intersecting $\beta_i$. The strips $S(\beta_i)$ are
open and simply connected. The natural parameter
$$
w=\int\sqrt{\varphi (z)}dz
$$
is well-defined on each $S(\beta_i)$ since $S(\beta_i)$ is simply connected and does
not contain any zeroes of $\varphi$. Any Borel $A\subset \beta_i$ has
well-defined width
$$
width(A)=\int_A|\sqrt{\varphi (z)}dz|.
$$

If $\gamma\subset S(\beta_i)$, denote by $\pi_{\beta_i} (\gamma )$ the
projection of $\gamma$ onto $\beta_i$ along the vertical trajectories. Then
the width of $\gamma$ is defined by
$$
width(\gamma )=\int_{\pi_{\beta_i}(\gamma )}|\sqrt{\varphi (z)}dz|.
$$

Assume that $\gamma$ is not contained in a single strip. Consider the
collection of Borel sets  $\pi_{\beta_i}(\gamma\cap S(\beta_i))$ for all $i$
with $\gamma\cap S(\beta_i)\neq\emptyset$.  We
define the {\it width} of $\gamma$ by
$$
width(\gamma )=\sum_{i=1}^{\infty} width(\gamma\cap S(\beta_i)).
$$

The definition $width(\gamma )$ is given in terms of the strips $S(\beta_i)$.
To see that $width(\gamma )$ is independent of the choice of the strips, let
$S(\beta_j')$ be another countable collection of disjoint open strips that
covers $\mathbb{D}$ up to countable union of singular vertical trajectories.
The two strips $S(\beta_i)$ and $S(\beta_j')$ are either disjoint or they
intersect in an open strip $S(\beta_{i,j})$, where $\beta_{i,j}$ is an open
subinterval on $\beta_i$ which can be homotoped modulo vertical trajectories
to subinterval of $\beta_j'$. The homotopy is measure preserving for $
\int_{*}|\sqrt{\varphi (z)}dz|$. Since $\beta_i-\cup_j\beta_{i,j}$ is at
most countable (which is of measure zero), it follows that
$$
width(\gamma\cap S(\beta_i) )=\sum_j width(\gamma\cap S(\beta_{i,j})).
$$
This implies that $width(\gamma )$ is independent of the choice of the
covering by the strips.

\begin{proposition}
\label{prop:upper_bound} Let $\Gamma =\Gamma ([a,b]\times [c,d])$ be the
family of rectifiable arcs in $\mathbb{D}$ with one endpoint in $[a,b]\subset
\mathbb{S}^1$ and the other endpoint in $[c,d]\subset \mathbb{S}^1$. Denote
by $T_{\epsilon}$ the Teichm\"uller map of $\mathbb{D}$ that shrinks the
vertical trajectories of $\varphi$ by the multiplicative constant $\epsilon
>0$. Then
$$
\limsup_{\epsilon\to 0^{+}}\epsilon\cdot \m(T_{\epsilon}(\Gamma ))\leq \m(\Gamma_v([a,b],[c,d]))
$$
where $\Gamma_v([a,b],[c,d])$ is the set of vertical trajectories of $\varphi$ with one
endpoint in $[a,b]$ and the other endpoint in $[c,d]$.
\end{proposition}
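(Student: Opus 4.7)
The plan is to construct an admissible density $\hat{\rho}_\epsilon$ for $T_\epsilon(\Gamma)$ on the target disk whose $L^2$ mass is asymptotic to $\m(\Gamma_v)/\epsilon$. I would work in the natural parameter of the push-forward quadratic differential $(T_\epsilon)_*\varphi$ on the target: in each image strip $T_\epsilon(S(\beta_i))$ the coordinate is $w' = u + i\epsilon v$, derived from the source natural parameter $w = u + iv$, and vertical trajectories at horizontal position $u$ have length $\epsilon l_i(u)$. My candidate density is
$$\hat{\rho}_\epsilon(w') := \frac{\chi_{I_0}(u)}{\epsilon\, l_i(u)}, \qquad I_0 := I_{[a,b]\times [c,d]},$$
viewed as a Borel density on the target disk via the local natural-parameter identification.

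To verify admissibility for $T_\epsilon(\Gamma)$, I would take $\gamma\in\Gamma$ with natural-parameter representation $\gamma(t) = u(t)+iv(t)$ and image $\gamma' = T_\epsilon(\gamma)$, and use $|dw'| = \sqrt{u'(t)^2 + \epsilon^2 v'(t)^2}\,dt \geq \epsilon|v'(t)|\,dt$. For a vertical trajectory $\gamma\in\Gamma_v$ this immediately gives $\int_{\gamma'}\hat{\rho}_\epsilon|dw'| = 1$. For a general $\gamma\in\Gamma$ I would carry out a strip-by-strip analysis: in each strip $S(\beta_i)$ the arc $\gamma\cap S(\beta_i)$ joins parts of $\partial S(\beta_i)$, and its contribution to $\int_{\gamma'}\hat{\rho}_\epsilon|dw'|$ is to be compared with the analogous integral along a vertical trajectory through $S(\beta_i)$. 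If strict admissibility fails for some exotic curves, I would perturb $\hat{\rho}_\epsilon$ by a uniform additive correction whose total mass is $o(1/\epsilon)$ and thus does not affect the limsup.

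Finally, the mass calculation in the natural parameter gives
$$\int_{\mathbb{D}}\hat{\rho}_\epsilon(w')^2\,dx'\,dy' \;=\; \sum_i \int_{I_i\cap I_0}\!du\int_0^{\epsilon l_i(u)}\!\frac{dv'}{(\epsilon l_i(u))^2} \;=\; \frac{1}{\epsilon}\sum_i\int_{I_i\cap I_0}\!\frac{du}{l_i(u)} \;=\; \frac{\m(\Gamma_v)}{\epsilon}.$$
Combining admissibility with this bound yields $\m(T_\epsilon(\Gamma)) \leq \m(\Gamma_v)/\epsilon + o(1/\epsilon)$, and hence $\limsup_{\epsilon\to 0^+}\epsilon\,\m(T_\epsilon(\Gamma)) \leq \m(\Gamma_v)$. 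The hard part will be the admissibility verification for non-vertical curves: an arbitrary rectifiable arc joining $[a,b]$ to $[c,d]$ can have limited vertical extent over $I_0$, so either a delicate strip-by-strip comparison or a carefully bounded additive correction to $\hat{\rho}_\epsilon$ is required.
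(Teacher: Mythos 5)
Your candidate density is, up to the factor $1/\epsilon$ and the restriction to $I_0$, exactly the Beurling extremal metric for the family of vertical trajectories itself (that is the content of Proposition \ref{prop:vertical_mod_measure}), and your mass computation is correct. But admissibility for all of $T_\epsilon(\Gamma)$ is not a deferrable technicality --- it is the entire proposition, and the proposed fixes cannot deliver it. Two concrete failures: (i) $\hat\rho_\epsilon$ vanishes off the union of the trajectories in $\Gamma_v([a,b],[c,d])$, and $\Gamma$ contains curves that essentially miss this set --- in the extreme case where no vertical trajectory joins $[a,b]$ to $[c,d]$ one has $\hat\rho_\epsilon\equiv 0$ while $\Gamma\neq\emptyset$, yet the proposition still asserts $\epsilon\,\m(T_\epsilon(\Gamma))\to 0$, a nontrivial statement your construction gives no handle on; even when $\Gamma_v\neq\emptyset$, a curve from $a$ to $d$ (both on the same side of every $v\in\Gamma_v$, since such $v$ has endpoints $p\in(a,b)$, $q\in(c,d)$) can hug the boundary arc from $d$ to $a$ and pick up arbitrarily little $\hat\rho_\epsilon$-length. (ii) For each fixed $\epsilon$, $\Gamma$ contains curves of arbitrarily small positive width that shadow vertical trajectories \emph{not} in $\Gamma_v([a,b],[c,d])$ and jog horizontally a tiny amount at the end; their $T_\epsilon$-images have $\varphi_\epsilon$-length of order $\epsilon$, so a uniform additive constant $c_\epsilon$ making the metric admissible for them must be of order $1/\epsilon$, whose squared mass over the target (of $\varphi_\epsilon$-area $\epsilon\|\varphi\|_{L^1}$) is of order $1/\epsilon$, not $o(1/\epsilon)$. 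Localizing that correction to a region of small $|\varphi|$-mass is exactly what would be needed, and that is where all the work lies.

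The paper's proof is built around these obstructions rather than around an explicit metric. After an overflowing reduction to curves joining two cross-cuts $l_{a',b'}$, $l_{c',d'}$ with endpoints at finite $\varphi$-distance, it splits the family three ways: curves of width $>\eta$ are handled by the metric $\eta^{-1}|\sqrt{\varphi_\epsilon}|$, whose mass $\epsilon\eta^{-2}\|\varphi\|_{L^1}$ is $o(1/\epsilon)$; curves of width $\le\eta$ that leave a compact core $\mathbb{D}'_R$ have modulus bounded by the tail $\iint_{\mathbb{D}'\setminus\mathbb{D}'_R}|\varphi|<\epsilon_1$; and for curves of width $\le\eta$ inside the core, Keith's upper semicontinuity of modulus under convergence of curve families shows that the $\limsup$ as $\eta\to 0$ is at most $\m(\Gamma_v(l_{a',b'},l_{c',d'}))$, since width-zero limits are vertical trajectories (or negligible concatenations through zeros). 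Two further limiting arguments replace the cross-cuts by boundary arcs and shrink $[a',b']\times[c',d']$ to $[a,b]\times[c,d]$. To rescue your approach you would have to rebuild all three mechanisms inside the ``correction term,'' at which point you would have reproduced the paper's argument; as written, the step you defer is the whole difficulty.
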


\begin{proof}
By Strebel \cite{Str}, almost every point of $\mathbb{S}^1$ is at a finite
distance from an interior point of $\mathbb{D}$ in the path metric
$\int_{*}\sqrt{|\varphi (z)}dz|$, called $\varphi${\it -metric}. Let
$a',b',c',d'\in \mathbb{S}^1$ be on finite distances from an interior point
such that $[a,b]\subset [a',b']$ and $[c,d]\subset [c',d']$. Let
$\Gamma'=\Gamma ([a',b']\times [c',d'])$.

\begin{figure}
\centering
\includegraphics[width=6cm]{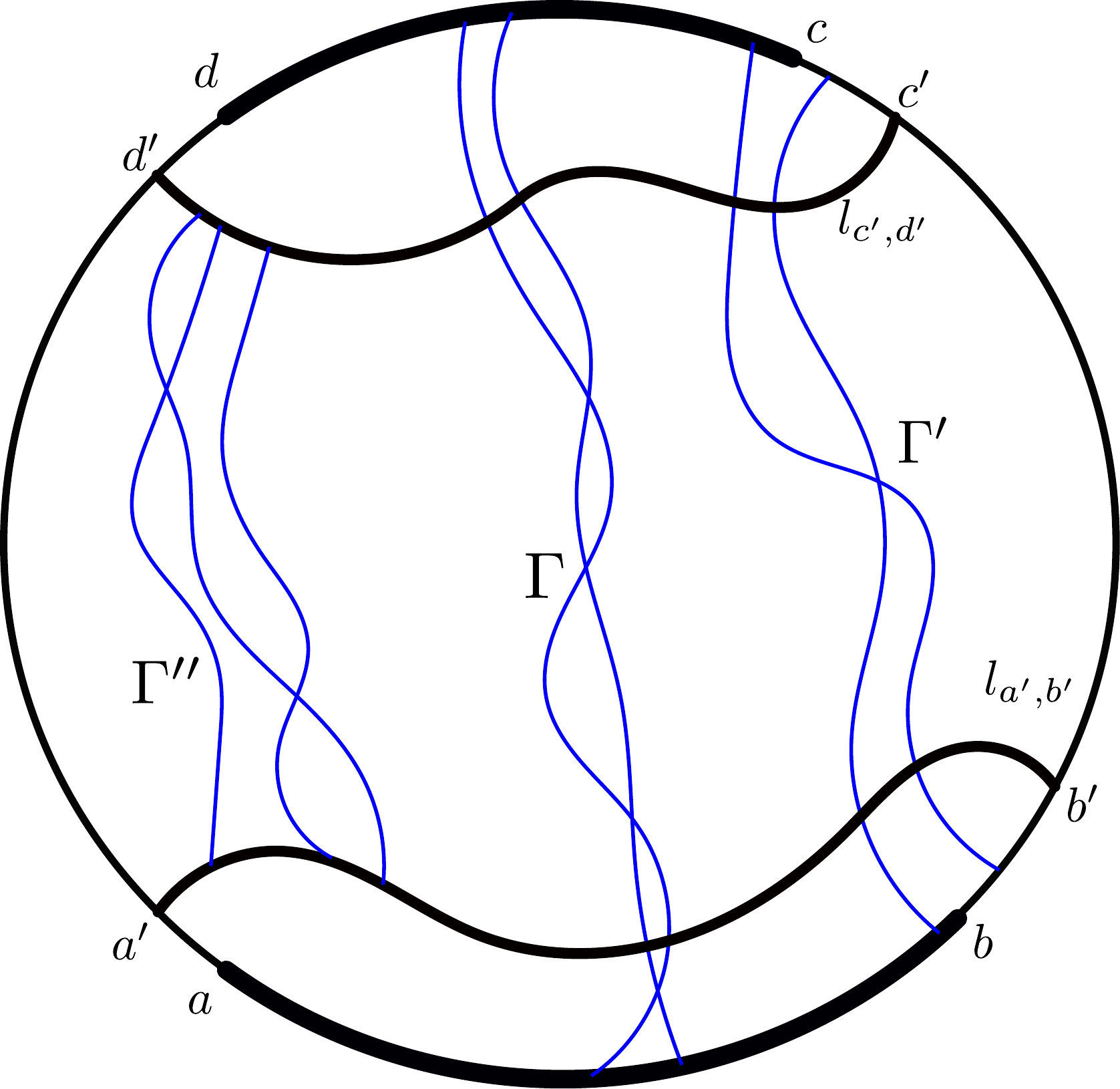}
\caption{The curve families $\Gamma$, $\Gamma'$ and $\Gamma''$.}
\end{figure}

Namely, let (cf. Figure 2) $$\Gamma'=\{\gamma | \,\gamma\mbox{ is rectifiable
and has endpoints in }[a',b']\mbox{ and }[c',d']\}.$$ Since
$\Gamma\subset\Gamma'$, we have $\m(T_{\epsilon}(\Gamma ))\leq
\m(T_{\epsilon}(\Gamma'))$. Let $l_{a',b'}$ and $l_{c',d'}$ be two simple
non-intersecting differentiable arcs in $\mathbb{D}$ with endpoints $a',b'$
and $c',d'$, respectively. Let $\mathbb{D}'$ be the subset of $\mathbb{D}$
with boundary consisting of arcs $l_{a',b'}$, $[b',c']\subset \mathbb{S}^1$,
$l_{c',d'}$ and $[d',a']\subset \mathbb{S}^1$.  Let $\Gamma''=\Gamma
(l_{a',b'}\times l_{c',d'})$ be the family of rectifiable curves in
$\mathbb{D}'$ that connect $l_{a',b'}$ and $l_{c',d'}$. Then the family
$\Gamma'$ overflows the family $\Gamma''$ and we have
\begin{equation}
\label{eq:G'_G''}
\m(T_{\epsilon}(\Gamma'))\leq \m(T_{\epsilon}(\Gamma'')).
\end{equation}

Fix $\eta >0$ and define
$$
\Gamma_{>\eta}''=\{ \gamma\in\Gamma'' |\, width(\gamma )>\eta\}
$$
and
$$
\Gamma_{\leq\eta}''=\{ \gamma\in\Gamma'' |\, width(\gamma )\leq\eta\}.
$$
By the subadditivity of the modulus
$$
\m(T_{\epsilon}(\Gamma''))\leq \m(T_{\epsilon}(\Gamma''_{>\eta}))+ \m(T_{\epsilon}(\Gamma''_{\leq\eta})).
$$
First consider $\m(T_{\epsilon}(\Gamma''_{>\eta}))$. Define the metric
$\rho_{\epsilon}(w)=\frac{1}{\eta}|\sqrt{\varphi_{\epsilon} (w)dw^2}|$ for
$w\in\mathbb{D}'_{\epsilon}$, where $\varphi_{\epsilon}$ is the terminal
holomorphic quadratic differential on
$T_{\epsilon}(\mathbb{D}')=\mathbb{D}_{\epsilon}'$ (cf. \cite{GL}). Recall that the terminal
quadratic differential on $T_{\epsilon}(\mathbb{D}')$ is obtained as follows.
Let $\zeta$ be the natural parameter of $\varphi$ on $\mathbb{D}'$, i.e.
$d\zeta^2=\varphi (z)dz^2$; let $\omega =T_{\epsilon,\zeta }(\zeta )$, where
$T_{\epsilon,\zeta }$ shrinks the vertical direction of $\zeta$ by the
multiplicative constant $\epsilon >0$. Then the terminal quadratic differential $\varphi_{\epsilon}$ is defined in the image of the natural parameter as $\varphi_{\epsilon}(\omega
)d\omega^2=d\omega^2$. If $w=T_{\epsilon}(z)$ then
$\varphi_{\epsilon}(w)dw^2=d\omega^2$.

The metric $\rho_{\epsilon}$ is admissible for
$T_{\epsilon}(\Gamma''_{>\eta})$ since $width(T_{\epsilon}(\gamma ))>\eta$
for all $\epsilon >0$ and all $\gamma \in T_{\epsilon}(\Gamma''_{>\eta})$.
Then
$$
\m(T_{\epsilon}(\Gamma''_{>\eta}))\leq \iint_{T_{\epsilon}(\mathbb{D}')} \rho_{\epsilon}(w)^2dA=\frac{\epsilon}{\eta^2}\iint_{\mathbb{D}'}|\varphi (w)|dA
$$
which gives
\begin{equation}
\label{eq:>eta}
\limsup_{\epsilon\to 0^{+}} \epsilon\cdot \m(T_{\epsilon}(\Gamma''_{>\eta}))=0,
\end{equation}
since $\phi$ is integrable.

We estimate $\m(T_{\epsilon}(\Gamma_{\leq\eta}''))$. Let $z_0\in \mathbb{D}'$
be fixed. Denote by $d^{\varphi}$ the path metric defined by integrating
$|\sqrt{\varphi (z)dz^2}|$ i.e. the $\varphi$-metric. Let $d_0=\max_{z\in l_{a',b'}\cup l_{c',d'}}
d^{\varphi}(z_0,z)$. For $R>0$ define $\mathbb{D}'_R=\{
z\in\mathbb{D}'|d^{\varphi}(z_0,z)\leq R\}$. Given $\epsilon_1>0$ there
exists $R>2d_0$ such that
$$
\iint_{\mathbb{D}'-\mathbb{D}'_R}|\varphi (z)|dA<\epsilon_1.
$$
Denote by $\Gamma_v(l_{a',b'},l_{c',d'})$ the set of vertical trajectories
$\gamma$ connecting $l_{a',b'}$ with $l_{c',d'}$. The choice $R>2d_0$ and the
fact that the vertical trajectories are geodesics for $d^{\varphi}$ implies
that $\Gamma_v(l_{a',b'},l_{c',d'})\subset \mathbb{D}'_R$. From now on we
choose $R=R(\epsilon_1)$ as above.

For $M>0$, define $(\Gamma_{\leq\eta}'')_M=\{ \gamma\in
\Gamma_{\leq\eta}''|\gamma\subset \mathbb{D}'_M\}$. Note that
$$
\Gamma_{\leq\eta}''=(\Gamma_{\leq\eta}'')_{R+1}\cup [ \Gamma_{\leq\eta}''\setminus(\Gamma_{\leq\eta}'')_{R+1}]
$$
which gives
$$
\m(T_{\epsilon}(\Gamma_{\leq\eta}''))\leq \m(T_{\epsilon}((\Gamma_{\leq\eta}'')_{R+1}))+\m (T_{\epsilon}(\Gamma_{\leq\eta}''\setminus(\Gamma_{\leq\eta}'')_{R+1})).
$$

Since $T_{\epsilon}$ is $\epsilon^{-1}$-quasiconformal, we have
\begin{equation*}
\begin{split}
\epsilon\cdot \m (T_{\epsilon}(\Gamma_{\leq\eta}''\setminus(\Gamma_{\leq\eta}'')_{R+1}))\leq \epsilon\cdot \epsilon^{-1}\cdot \m (\Gamma_{\leq\eta}''\setminus(\Gamma_{\leq\eta}'')_{R+1})
=
\m (\Gamma_{\leq\eta}''\setminus(\Gamma_{\leq\eta}'')_{R+1}).
\end{split}
\end{equation*}
Define metric $\rho (z)=\sqrt{|\varphi (z)dz^2|}$ for
$z\in\mathbb{D}'-\mathbb{D}'_R$ and $\rho (z)=0$ otherwise. Then $\rho (z)$
is admissible for the family
$\Gamma_{\leq\eta}''\setminus(\Gamma_{\leq\eta}'')_{R+1}$. Thus
\begin{equation}
\label{eq:>R+1}
\limsup_{\epsilon\to 0^{+}}\epsilon\cdot \m(T_{\epsilon}(\Gamma_{\leq\eta}''-(\Gamma_{\leq\eta}'')_{R+1}))\leq\iint _{\mathbb{D}'-\mathbb{D}'_{R}}|\varphi (z)|dA<\epsilon_1.
\end{equation}

We estimate $\m(T_{\epsilon}((\Gamma_{\leq\eta}'')_{R+1})$. Note that
$\mathbb{D}'_{R+1}$ is a compact metric space for the distance $d^{\varphi}$.
Similar to the above
$$
\epsilon\cdot \m(T_{\epsilon}((\Gamma_{\leq\eta}'')_{R+1}))\leq \m((\Gamma_{\leq\eta}'')_{R+1}).
$$
By Keith \cite{Kei}, we have that
\begin{align}
\limsup_{\eta\to 0^{+}}\m((\Gamma_{\leq\eta}'')_{R+1})\leq
\m(\limsup_{\eta\to 0^{+}}(\Gamma_{\leq\eta}'')_{R+1}).
\end{align}
Recall that a sequence $\{\Gamma_n\}$ of families of curves converges to a family $\Gamma$ if for each $\gamma\in \Gamma$ there exists a subsequence $\gamma_{n_k}\in\Gamma_{n_k}$ such that  uniformly Lipschitz parameterizations of $\gamma_{n_k}$ converge to $\gamma$ as functions when $n_k\to\infty$, and if the limit of each convergent subsequence is a curve in $\Gamma$ (cf. \cite{Kei}).

We establish that
\begin{equation}
\label{eq:lim_curves}
\limsup_{\eta\to 0^{+}}(\Gamma_{\leq\eta}'')_{R+1}=\Gamma_v(l_{a',b'},l_{c',d'}).
\end{equation}
Let $\gamma_n:I\to\mathbb{D}'_{R+1}$ be a sequence of uniformly Lipschitz
parametrizations of curves in $(\Gamma_{\leq{\eta_n}}'')_{R+1}$ with
$\eta_n\to 0$ as $n\to\infty$ that converges to $\gamma :I\to
\mathbb{D}'_{R+1}$. Then
$$
width(\gamma )=0.
$$
Indeed, $width(\gamma )=c>0$ implies that $width(\gamma_n)>c/2>0$ for all $n$
large enough. This contradicts $\gamma_n\in
(\Gamma_{\leq{\eta_n}}'')_{R+1}$.

Since $width(\gamma )=0$, this implies $\gamma \in
\Gamma_v(l_{a',b'},l_{c',d'})$ or that $\gamma$ is composed of several vertical trajectories that meet at a zero of $\varphi$. The later curves are at most countable and their modulus is zero, so we can ignore them. Since $\Gamma_v(l_{a',b'},l_{c',d'})\subset
(\Gamma_{\leq\eta}'')_{R+1}$ by our choice of $R>0$, we obtain
(\ref{eq:lim_curves}). Then (\ref{eq:G'_G''}), (\ref{eq:>eta}),
(\ref{eq:>R+1}) and (\ref{eq:lim_curves}) imply that
\begin{equation}
\label{eq:G_va'}
\limsup_{\epsilon\to 0^{+}}\epsilon\cdot \m(T_{\epsilon}(\Gamma ))\leq \m(\Gamma_v(l_{a',b'},l_{c',d'})).
\end{equation}

We prove that $\Gamma_v(l_{a',b'},l_{c',d'})$ can be replaced by
$\Gamma_v([a,b]\times [c,d])$ in (\ref{eq:G_va'}). Note that (\ref{eq:G_va'})
is true for all $l_{a',b'}$ and $l_{c',d'}$. Choose a sequence $l_{a',b'}^k$
and $l_{c',d'}^k$  such that $l_{a',b'}^k\to [a',b']\subset \mathbb{S}^1$ and
$l_{c',d'}^k\to [c',d']\subset \mathbb{S}^1$ as $k\to\infty$ in the Hausdorff
topology on closed subsets of $\bar{\mathbb{D}}=\mathbb{D}\cup \mathbb{S}^1$.
Denote by $\mathbb{D}_k'$ the subset of $\mathbb{D}$ corresponding to
$l_{a',b'}^k$ and $l_{c',d'}^k$. Define
$$\Gamma_v^k([a',b'],[c',d']):=\Gamma_v([a',b'],[c',d'])\cap \mathbb{D}_k'.
$$

We claim that
\begin{equation}
\label{eq:conv_l_a'}
\lim_{k\to\infty}
\m(\Gamma_v(l_{a',b'}^k,l_{c',d'}^k)-\Gamma_v^k([a',b'],[c',d']))=0.
\end{equation}
Indeed, let $C >0$ be the lower bound on the distance $d^{\varphi}$ between
$l_{a',b'}^k$ and $l_{c',d'}^k$ over all $k$. Then $\rho
(z)=\frac{1}{C}\sqrt{|\varphi (z)|} |dz|$ is admissible for
$\Gamma_v(l_{a',b'}^k,l_{c',d'}^k)$. Let $A_k$ be the union of the (complete)
vertical trajectories in $\mathbb{D}$ that connect $l_{a',b'}^k$ and
$l_{c',d'}^k$ and do not connect $[a',b']$ and $[c',d']$.  Then $A_k\supset
A_{k+1}$ for all $k$ (since we can choose $l_{a',b'}^k$ and $l_{c',d'}^k$
such that $\mathbb{D}'_k\subset\mathbb{D}'_{k+1}$).

We claim that $\cap_{k=1}^{\infty}A_k=\emptyset$. Assume that a horizontal
trajectory $\gamma$ belongs to the union that makes $A_k$. Then there exists
either a Euclidean neighborhood of $[a',b']$ or a Euclidean neighborhood of
$[c',d']$ in $\bar{\mathbb{D}}=\mathbb{D}\cup \mathbb{S}^1$ such that
$\gamma$ is disjoint from this neighborhood. There exists $k'>k$ such that
$\gamma$ does not intersect either $l_{a',b'}^{k'}$ or $l_{c',d'}^{k'}$. Thus
$\gamma$ does not belong to $\cap_{k=1}^{\infty}A_k$ and
$\cap_{k=1}^{\infty}A_k=\emptyset$. This gives
$$
\iint_{A_k}|\varphi (z)|dxdy\to 0
$$
as $k\to\infty$ and (\ref{eq:conv_l_a'}) follows. From (\ref{eq:G_va'}) and
(\ref{eq:conv_l_a'}) we get
\begin{equation}
\label{eq:vert_a'}
\limsup_{\epsilon\to 0^{+}}\epsilon\cdot \m(T_{\epsilon}(\Gamma ))\leq \lim_{k\to\infty}\m(\Gamma_v^k([a',b'],[c',d'])).
\end{equation}

By Keith \cite{Kei}, we have that
$$
\lim_{k\to\infty}\m(\Gamma_v^k([a',b'],[c',d']))\leq \m( \limsup_{k\to\infty} \Gamma_v^k([a',b'],[c',d']))
$$
where $\limsup_{k\to\infty} \Gamma_v^k([a',b'],[c',d'])$ is for the Euclidean
metric on $\bar{\mathbb{D}}=\mathbb{D}\cup \mathbb{S}^1$. For every point of
$\gamma$ which is not a zero of $\varphi$, there exists an open subarc of
$\gamma$ containing the point that is a part of a vertical trajectory of
$\varphi$ because $\gamma_k$ are vertical trajectories. Moreover the limit
$\gamma$ has one endpoint in $[a',b']$ and the other endpoint in $[c',d']$
because $\gamma_k$ has one endpoint on $l_{a',b'}^k$ and one endpoint on
$l_{c',d'}^k$, and $l_{a',b'}^k$  converges to $[a',b']$ and $l_{c',d'}^k$
converges to $[c',d']$.  Therefore every limit $\gamma$ is a vertical
trajectory that necessarily belongs to $\Gamma_v([a',b'],[c',d'])$ or it is
composed of several vertical trajectories meeting at zeros of $\varphi$. The
later family is countable and of zero modulus and without loss of generality
we ignore it. Therefore
\begin{equation}
\label{eq:ineq_vert}
\limsup_{\epsilon\to 0^{+}}\epsilon\cdot \m(T_{\epsilon}(\Gamma )\leq \m(\Gamma_v([a',b'],[c',d'])).
\end{equation}

We choose sequences $[a_k',b_k']\supset [a,b]$ and $[c_k',d_k']\supset [c,d]$
on finite distance from $z_0$ such that $a'_k\to a$, $b_k'\to b$, $c_k'\to c$
and $d_k'\to d$ as $k\to\infty$. The inequality (\ref{eq:ineq_vert}) holds
for these sequences and we need to prove that it holds for
$\Gamma_v([a,b],[c,d])$ as well. It is enough to prove that
$$
\m(\Gamma_v([a_k',b_k'],[c_k',d_k'])-\Gamma_v([a,b],[c,d]))\to 0.
$$

Let $\mathbb{D}_k$ be the union of vertical trajectories in
$\Gamma_v([a_k',b_k'],[c_k',d_k'])-\Gamma_v([a,b],[c,d])$ and note
$\cap_{k=1}^{\infty}\mathbb{D}_k=\emptyset$. Define $\rho
(z)=1/l_v(z)\sqrt{|\varphi (z)dz^2|}$ for $z\in\mathbb{D}_k$ and $\rho (z)=0$
otherwise, where $l_v(z)$ is the length of the vertical trajectory through
$z$ with respect to the metric $d^{\varphi}$. Then $\rho$ is allowable metric
for the family $\Gamma_v([a_k',b_k'],[c_k',d_k'])-\Gamma_v([a,b],[c,d])$.
We have
$$
\m(\Gamma_v([a_k',b_k'],[c_k',d_k'])-\Gamma_v([a,b],[c,d]))\leq\iint_{\mathbb{D}_k}\frac{1}{l_v(z)^2}|\varphi (z)|dxdy.
$$
We claim that $l_v(z)$ has a positive lower bound in $\mathbb{D}_k$. Indeed,
since intervals $[a_k',b_k']$ and $[c_k',d_k']$ are disjoint and decreasing,
their distance in $d^{\varphi}$ metric is positive which implies that any
vertical trajectories connecting them must have lengths bounded below by a
positive constant. Thus $\frac{1}{l_v(z)^2}$ is bounded above. Then
$\cap_{k=1}^{\infty} \mathbb{D}_k=\emptyset$ implies that
$\iint_{\mathbb{D}_k}\frac{1}{l_v(z)^2}|\varphi (z)|dxdy\to 0$ as
$k\to\infty$. The proof is finished.
\end{proof}

\begin{theorem}
\label{thm:main} Let $\Gamma$ be the family of rectifiable arcs in
$\mathbb{D}$ with one endpoint in $[a,b]\subset \mathbb{S}^1$ and the other
endpoint in $[c,d]\subset \mathbb{S}^1$. Denote by $T_{\epsilon}$ the
Teichm\"uller map of $\mathbb{D}$ that shrinks the vertical trajectories of
$\varphi$ by the multiplicative constant $\epsilon >0$. Then
$$
\lim_{\epsilon\to 0^{+}}\epsilon\cdot \m(T_{\epsilon}(\Gamma ))= \m(\Gamma_v([a,b],[c,d]))
$$
where $\Gamma_v([a,b],[c,d])$ is the set of vertical trajectories with one
endpoint in $[a,b]$ and the other endpoint in $[c,d]$.
\end{theorem}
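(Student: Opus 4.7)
The upper bound is exactly Proposition~\ref{prop:upper_bound}, so the plan is to establish the matching lower bound
$$\liminf_{\epsilon\to 0^+}\epsilon\cdot\m(T_\epsilon(\Gamma))\ \ge\ \m(\Gamma_v([a,b],[c,d])).$$
The idea is to bound $\m(T_\epsilon(\Gamma))$ below using only the subfamily of $T_\epsilon$-images of vertical trajectories, and to evaluate that subfamily modulus explicitly via a one-dimensional Cauchy--Schwarz calculation in the natural parameter of the terminal quadratic differential $\varphi_\epsilon$.

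Every non-singular vertical trajectory of $\varphi$ with endpoints in $[a,b]$ and $[c,d]$ lies in $\Gamma$ (the singular vertical trajectories meeting zeros of $\varphi$ form a countable family of zero modulus and may be discarded), so Lemma~\ref{lemma:modproperties} gives
$$\m(T_\epsilon(\Gamma))\ \ge\ \m\bigl(T_\epsilon(\Gamma_v([a,b],[c,d]))\bigr).$$
I would then decompose $\mathbb{D}$ into the disjoint open Strebel strips $S(\beta_i)$, so that the images $T_\epsilon(S(\beta_i))$ are disjoint in $T_\epsilon(\mathbb{D})$. On each $T_\epsilon(S(\beta_i))$, the natural parameter $\omega$ of $\varphi_\epsilon$ is a conformal chart onto a planar region in which the image of the vertical trajectory at horizontal position $\xi\in J_i$ (the sub-arc of $\beta_i$ that parametrizes those vertical trajectories whose endpoints lie in $[a,b]$ and $[c,d]$) is a straight vertical segment of length $\epsilon l(\xi)$. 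For any metric $\rho$ admissible for $T_\epsilon(\Gamma_v([a,b],[c,d]))$, its conformal pullback to each $\omega$-chart satisfies $\int_0^{\epsilon l(\xi)}\rho\,d\eta\ge 1$; Cauchy--Schwarz then gives $\int_0^{\epsilon l(\xi)}\rho^2\,d\eta\ge 1/(\epsilon l(\xi))$, and integration in $\xi$ together with summation over $i$ yields
$$\int\rho^2\,dA\ \ge\ \frac{1}{\epsilon}\sum_i\int_{J_i}\frac{1}{l(\xi)}\,d\xi.$$
The same calculation performed in the natural parameter of $\varphi$ (i.e.\ with $\epsilon=1$) proves $\m(\Gamma_v([a,b],[c,d]))\le \sum_i\int_{J_i}(1/l(\xi))\,d\xi$, with the reverse inequality realized by the explicit admissible metric $\rho_0(z)=l_v(z)^{-1}|\sqrt{\varphi(z)}|$ supported on the relevant strips, where $l_v(z)$ denotes the $\varphi$-length of the vertical trajectory through $z$. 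Combining the two estimates gives
$$\epsilon\cdot\m(T_\epsilon(\Gamma))\ \ge\ \epsilon\cdot\m\bigl(T_\epsilon(\Gamma_v([a,b],[c,d]))\bigr)\ \ge\ \m(\Gamma_v([a,b],[c,d]))$$
for every $\epsilon>0$, and taking $\liminf$ concludes the proof.

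The principal subtlety I anticipate is the strip-by-strip bookkeeping: one must verify that the countable families of singular vertical trajectories (through zeros of $\varphi$) and of separatrices bounding the Strebel strips contribute zero modulus in both $\Gamma$ and $\Gamma_v([a,b],[c,d])$, and that the Dirichlet integral $\int\rho^2\,dA$ genuinely majorizes the sum of its restrictions to the disjoint pieces $T_\epsilon(S(\beta_i))$. These are standard consequences of Strebel's structure theorem for the vertical foliation, though they deserve care in our non-compact setting where $\varphi$ may possess infinitely many zeros in $\mathbb{D}$.
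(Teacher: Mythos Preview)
Your proposal is correct and follows essentially the same route as the paper: the upper bound is Proposition~\ref{prop:upper_bound}, and the lower bound comes from the monotonicity $\m(T_\epsilon(\Gamma))\ge \m(T_\epsilon(\Gamma_v))$ together with the scaling identity $\epsilon\cdot\m(T_\epsilon(\Gamma_v))=\m(\Gamma_v)$. The paper simply asserts this scaling identity in one line (vertical trajectories are shrunk by $\epsilon$, so the modulus of the purely vertical family scales by $1/\epsilon$), whereas you unpack it via the Cauchy--Schwarz computation in the natural parameter; that computation is in fact the content of Proposition~\ref{prop:vertical_mod_measure}, which the paper states separately.
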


\begin{proof}
We keep the notation as in the proof of Proposition \ref{prop:upper_bound}.
Since $\Gamma_v([a,b],[c,d])\subset \Gamma$, it follows that
$\m(\Gamma_v([a,b],[c,d]))\leq \m(\Gamma )$. Because $\Gamma_v([a,b],[c,d])$
consists of only vertical trajectories, it follows that
$$
\epsilon\cdot \m(T_{\epsilon}(\Gamma_v([a,b],[c,d])))=\m(\Gamma_v([a,b],[c,d])).
$$
Thus
$$
\m(\Gamma_v([a,b],[c,d]))\leq\liminf_{\epsilon\to 0^{+}} \epsilon\cdot \m(T_{\epsilon}(\Gamma )).
$$
The opposite inequality is obtained in Proposition \ref{prop:upper_bound} and
theorem follows.
\end{proof}

\vskip .2 cm

We give an equivalent definition of $\m(\Gamma_v([a,b],[c,d]))$.

\vskip .2 cm

\begin{proposition}
\label{prop:vertical_mod_measure} Let $\varphi$ be an integrable holomorphic
quadratic differential on the unit disk $\mathbb{D}$. Then $$
\m(\Gamma_v([a,b],[c,d]))=\int_I \frac{1}{l(z)}|Re(\sqrt{\varphi (z)}dz)|
$$
where $I$ is at most countable set of horizontal arcs that intersects each trajectory of $\Gamma_v([a,b],[c,d])$ in one point and no other vertical trajectories up to countably many of them, and $l(z)$ is the length of the vertical trajectory through $z$.
\end{proposition}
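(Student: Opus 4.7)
The plan is to reduce the modulus computation to a Euclidean one in the natural parameter of $\varphi$, using the strip decomposition described earlier in the paper.

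First I would localize the problem to the Strebel-type strip decomposition $\mathbb{D}=\bigsqcup_i S(\beta_i)$ introduced just before Proposition~\ref{prop:upper_bound}. Writing $\Gamma_v^i = \Gamma_v([a,b],[c,d])\cap S(\beta_i)$, the families $\Gamma_v^i$ live in the pairwise disjoint open strips $S(\beta_i)$, and $\Gamma_v([a,b],[c,d])$ differs from $\bigcup_i \Gamma_v^i$ by at most countably many vertical trajectories that hit a zero of $\varphi$ (which form a family of modulus zero). Since an admissible metric for $\bigcup_i \Gamma_v^i$ restricts to an admissible metric on each $S(\beta_i)$, standard additivity of modulus on disjoint supports (combined with subadditivity) gives
$$\m(\Gamma_v([a,b],[c,d])) = \sum_i \m(\Gamma_v^i).$$

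Next I would compute $\m(\Gamma_v^i)$ in the natural parameter. On each $S(\beta_i)$, the map $w=\int\sqrt{\varphi(z)}\,dz$ is a conformal identification of $S(\beta_i)$ with a Euclidean region $\Omega_i=\{(u,v):u\in J_i,\ y_i^-(u)<v<y_i^+(u)\}$, sending the horizontal arc $\beta_i$ to the base interval $J_i$ on the $u$-axis and every vertical trajectory of $\varphi$ through $u$ to the vertical segment $\{u\}\times(y_i^-(u),y_i^+(u))$. Let $J_i^*\subset J_i$ be the set of $u$ whose vertical segment has endpoints in $[a,b]$ and $[c,d]$, and let $l(u)=y_i^+(u)-y_i^-(u)$ denote the $\varphi$-length of the corresponding trajectory. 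A classical Euclidean modulus computation then gives
$$\m(\Gamma_v^i)=\int_{J_i^*}\frac{du}{l(u)}.$$
The upper bound uses the candidate metric $\rho(u,v)=l(u)^{-1}\chi_{J_i^*\times\mathbb{R}}(u,v)$, whose $\rho$-length along each vertical segment is exactly $1$ and whose area integral is precisely $\int_{J_i^*} du/l(u)$. The lower bound follows from Cauchy--Schwarz: for any admissible $\rho$,
$$1\le\Big(\int_{y_i^-(u)}^{y_i^+(u)}\rho\,dv\Big)^2\le l(u)\int_{y_i^-(u)}^{y_i^+(u)}\rho^2\,dv,$$
and integrating $1/l(u)$ against $du$ produces the required inequality.

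Finally I would sum over $i$ and pull back to $\mathbb{D}$. In the natural parameter, $du=|\mathrm{Re}(dw)|=|\mathrm{Re}(\sqrt{\varphi(z)}\,dz)|$ along a horizontal arc, and the union of horizontal arcs $\beta_i$ intersects each vertical trajectory in $\bigcup_i \Gamma_v^i$ exactly once. Hence
$$\sum_i \int_{J_i^*}\frac{du}{l(u)}=\int_{I_0}\frac{1}{l(z)}\,|\mathrm{Re}(\sqrt{\varphi(z)}\,dz)|,$$
where $I_0=\bigcup_i(\beta_i\cap\{\text{trajectories in }\Gamma_v^i\})$. The independence of the right-hand side from the choice of the admissible transversal $I$ follows from the same argument used in the paper to show that $\mathrm{width}(\gamma)$ is well defined: two such transversals differ by a homotopy along vertical trajectories that preserves $|\mathrm{Re}(\sqrt{\varphi}\,dz)|$, up to a countable set of trajectories of measure zero. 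The main obstacle I anticipate is bookkeeping the countable exceptional sets (trajectories through zeros of $\varphi$, boundaries between strips, and trajectories that $I$ is allowed to miss), but each such set is at most countable and contributes neither to the modulus nor to the integral.
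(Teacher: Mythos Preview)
Your argument is correct and complete, but it takes a different path from the paper's proof. The paper establishes extremality of the metric $\rho(z)=l(z)^{-1}|\sqrt{\varphi(z)}\,dz|$ in one stroke by invoking Beurling's sufficiency criterion (from Ahlfors, \emph{Conformal Invariants}): it checks that whenever $\int_\gamma h_0\,|dz|\ge 0$ for every $\gamma\in\Gamma_v$, then $\iint h_0\rho^2\,dxdy\ge 0$, which in the natural parameter reduces to multiplying $\int h_0\,dy\ge 0$ by $l(x)^{-2}$ and integrating in $x$. You instead localize to the strip decomposition, use the additivity of modulus for families supported in disjoint Borel sets, and obtain the lower bound in each strip by the elementary length--area (Cauchy--Schwarz) inequality. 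Your route avoids the somewhat heavier machinery of Beurling's criterion and makes the role of the strip decomposition explicit, at the cost of the extra additivity step and the bookkeeping you flag; the paper's route is shorter but leans on a general extremality principle. Both verify the same extremal metric, so the approaches are equivalent in content.
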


\begin{proof}
The metric $\rho (z)= \frac{1}{l(z)}|\sqrt{\varphi (z)}dz|$ is
allowable for the family $\Gamma_v([a,b],[c,d])$ and thus
$\m(\Gamma_v([a,b],[c,d]))\leq \int_I \frac{1}{l(z)}|\sqrt{\varphi
(z)}dz|$.

We claim that $\rho (z)$ is extremal metric for the family $\Gamma_v([a,b],[c,d])$  which proves that we have
equality above. Using Beurling's criterion of sufficiency for extremal metrics
\cite{Ahl},  we need to show that if $\int_{\gamma}h_0(z)|dz|\geq 0$ for all
$\gamma\in \Gamma_v([a,b],[c,d])$ and some $h_0:\mathbb{D}\to\mathbb{R}$ then we have
$\iint_{\mathbb{D}}h_0(z)\rho (z)^2dxdy\geq 0$. By transferring the
integration to the natural parameter, we get that $\gamma $ are subsets of
vertical lines which implies $|dz|=dy$ and $\rho (z)=1/l(z)$. Note that $l(z)=l(x)$ is independent of $y$. Then
$\int_{\gamma}h_0(z)|dz|=\int_I h_0(z)dy\geq 0$ and multiplying with $1/l(x)^2$ and an integration in the $x$
direction gives the desired inequality (cf. \cite{HaSar}).
\end{proof}

Define a measured lamination $\mu_{\varphi}$ as follows. The support of
$\mu_{\varphi}$ is a geodesic lamination $v_{\varphi}$ obtained by taking
geodesics in $\mathbb{D}$ which are homotopic to the vertical trajectories of
$\varphi$ relative their endpoints on $\mathbb{S}^1$, i.e. a geodesic in the
support $v_{\varphi}$ of $\mu_{\varphi}$ has endpoints equal to a vertical
trajectory of $\varphi$. For a box of geodesics $[a,b]\times [c,d]$, define
$$
\mu_{\varphi}([a,b]\times [c,d])=\m(\Gamma_v([a,b],[c,d])).
$$
Note that $\mu_{\varphi}$ is a measure on the space of geodesics (i.e. it is
countable additive) by the above integration formula for
$\m(\Gamma_v([a,b],[c,d])).$ Also note that $\m(\Gamma ([a,b],[c,d]))$ is not
countably additive (since moduli are only countably subadditive) and hence it
does not define a measure on $\mathbb{S}^1\times \mathbb{S}^1-diag$.

\vskip .2 cm

\begin{proposition}
\label{prop:common_endpoints} Let $\mu_{\varphi}$ be the measured geodesic lamination
corresponding to an integrable holomorphic quadratic differential $\varphi$
on $\mathbb{D}$ as above. Then
$$
\mu_{\varphi}(\{ a\}\times [c,d])=0
$$
for all $a\in \mathbb{S}^1$ and $[c,d]\subset \mathbb{S}^1$ with $a\notin
[c,d]$.
\end{proposition}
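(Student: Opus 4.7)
The plan is to bound $\mu_{\varphi}(\{a\}\times [c,d])$ by the measure of small box neighborhoods of the slice $\{a\}\times[c,d]$ and to show that the latter tends to zero via a standard annulus modulus estimate. The measured lamination $\mu_{\varphi}$ is a Borel measure on the space of geodesics---indeed, by Proposition \ref{prop:vertical_mod_measure} its value on any box is given by the honest integral $\int_I \frac{1}{l(x)}|Re(\sqrt{\varphi(z)}\,dz)|$, which extends to a countably additive set function. Since $\{a\}\times [c,d]=\bigcap_{\epsilon>0}[a-\epsilon,a+\epsilon]\times [c,d]$ is a decreasing intersection of boxes of finite measure (the finiteness coming from Proposition 1 of the introduction), continuity from above gives
$$
\mu_{\varphi}(\{a\}\times [c,d])=\lim_{\epsilon\to 0^+}\mu_{\varphi}([a-\epsilon,a+\epsilon]\times [c,d]).
$$

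Next I would dominate the right-hand side by the modulus of \emph{all} rectifiable curves joining the two arcs, discarding the constraint of being a vertical trajectory. By the definition of $\mu_{\varphi}$ on boxes together with monotonicity of the modulus (Lemma \ref{lemma:modproperties}),
$$
\mu_{\varphi}([a-\epsilon,a+\epsilon]\times [c,d])=\m(\Gamma_v([a-\epsilon,a+\epsilon],[c,d]))\leq \m(\Gamma_{[a-\epsilon,a+\epsilon]\times [c,d]}).
$$

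Finally I would prove that $\m(\Gamma_{[a-\epsilon,a+\epsilon]\times [c,d]})\to 0$ via an annulus-overflow argument. Let $d_0=\dist_{\mathbb{C}}(a,[c,d])>0$, which is positive because $a\notin [c,d]$. For $\epsilon$ small enough, the arc $[a-\epsilon,a+\epsilon]\subset \mathbb{S}^1$ is contained in the Euclidean ball $B(a,C\epsilon)$ for a universal constant $C$, while $[c,d]$ lies outside $B(a,d_0/2)$. Every rectifiable curve from $[a-\epsilon,a+\epsilon]$ to $[c,d]$ in $\mathbb{D}$ therefore contains a subcurve crossing the round annulus $A_{\epsilon}=B(a,d_0/2)\setminus B(a,C\epsilon)$ from its inner circle to its outer circle. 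Hence $\Gamma_{[a-\epsilon,a+\epsilon]\times [c,d]}$ overflows the family of radial crossings of $A_{\epsilon}$, and by Lemma \ref{lemma:modproperties} together with the classical formula for the modulus of an annulus,
$$
\m(\Gamma_{[a-\epsilon,a+\epsilon]\times [c,d]})\leq \frac{2\pi}{\log\bigl(d_0/(2C\epsilon)\bigr)},
$$
which tends to $0$ as $\epsilon\to 0^+$. Combining the three displays yields $\mu_{\varphi}(\{a\}\times [c,d])=0$.

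The only delicate point is that $\mu_{\varphi}$ must be handled as a countably additive Borel measure for continuity from above to be legitimate; this is guaranteed by the integral representation in Proposition \ref{prop:vertical_mod_measure}. The remaining modulus estimate is elementary Euclidean geometry and uses no feature of $\varphi$ beyond the hypothesis $a\notin [c,d]$. As noted in the introduction, the ``no atoms'' conclusion of Proposition 1 is a direct corollary: any single geodesic $(a,b)$ is contained in the slice $\{a\}\times([c,d])$ for appropriately chosen $[c,d]\ni b$, so its measure is also zero.
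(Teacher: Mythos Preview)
Your argument is correct and considerably more elementary than the paper's. The paper proceeds by contradiction: assuming a positive-measure family of vertical trajectories share the endpoint $a$, it builds a Jordan domain $U$ bounded by two such vertical rays, a horizontal arc, and the single point $\{a\}$, maps $U$ conformally by the natural parameter, and invokes Carath\'eodory's theorem on prime ends to derive a contradiction (two rays of finite $\varphi$-length must land at distinct prime ends of $w(U)$, yet both correspond to $a$). Your route bypasses this geometric construction entirely via the monotonicity $\mu_{\varphi}(B)=\m(\Gamma_v(B))\le\m(\Gamma_B)$ and a standard annulus--overflow estimate, using nothing about $\varphi$ beyond the fact that $\mu_{\varphi}$ is a genuine Borel measure dominated on boxes by the full curve-family modulus. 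The paper's argument, by contrast, gives finer structural information (no simply connected region foliated by non-critical vertical rays can collapse to a single boundary point), at the cost of invoking prime-end theory.

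One small wrinkle: you justify the finiteness needed for continuity from above by citing Proposition~1 of the introduction, but in the paper's logical order that statement is deduced \emph{from} Proposition~\ref{prop:common_endpoints} together with the proof of Theorem~\ref{thm:weak*convergence}, and in any case it only controls boxes of a fixed cross-ratio. This is easily repaired: your own annulus bound in Step~3 already yields $\mu_{\varphi}([a-\epsilon,a+\epsilon]\times[c,d])\le 2\pi/\log\bigl(d_0/(2C\epsilon)\bigr)<\infty$ for each small $\epsilon$, so you should simply cite that instead and the argument becomes self-contained.
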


\begin{proof}
We recall that $\mathbb{D}$ is covered by countably many mutually disjoint
open strips $S(\beta_i)$ up to countably many vertical trajectories. Assume
on the contrary that $\mu_{\varphi}(\{ a\}\times [c,d])>0$. Then there exists
an open strip $S(\beta_{i_0})$ such that $$\int_{X_{i_0}}|\sqrt{\varphi
(z)dz^2}|>0,$$ where $\beta_{i_0}$ is the open arc on a horizontal
trajectory and $X_{i_0}=\beta_{i_0}\cap \Gamma_v(\{ a\}\times [c,d])$. By the
definition, $\int_{X_{i_0}}| \sqrt{\varphi (z)dz^2}|$ is the
horizontal measure in the natural parameter of $\varphi$ of the vertical trajectories of $\varphi$ intersecting
$\beta_{i_0}$.

For $z\in X_{i_0}$, let $l(z)$ be the length of the vertical
trajectory through $z$. Since $\varphi$ is integrable, we have that
$$
\int_{X_{i_0}}l(z)|
\sqrt{\varphi (z)dz^2}|<\infty
$$
which implies that $l(z)<\infty$ for a.e. $z\in X_{i_0}$.

Let $z_1,z_2\in X_{i_0}$ be such that there exists $z_1',z_2'\in X_{i_0}$
with $z_1<z_1'<z_2'<z_2$ for a linear order on $\beta_{i_0}$, and $l(z_1')$
and $l(z_2')$ finite. Let $\gamma_{z_i},\gamma_{z_i'}$ be the maximal vertical rays starting
at $z_i,z_i'$ respectively that have $a$ as their common endpoint. Note that
vertical rays $\gamma_{z_1}$ and $\gamma_{z_2}$ do not intersect
$\beta_{i_0'}$ except at their initial points because any two points in
$\mathbb{D}$ can be joined by at most one geodesic arc in the metric
$|\sqrt{\varphi (z)dz^2}|$ (cf. \cite[Theorem 14.2.1, page 72]{Str}). Let $[z_1,z_2]$ be the subarc of the vertical
trajectory between $z_1$ and $z_2$. Then $\gamma_{z_1}\cup\gamma_{z_2}\cup
[z_1,z_2]\cup\{ a\}$ is the boundary of a simply connected domain $U$ inside
$\mathbb{D}$.

We claim that $U$ is a Jordan domain. Indeed, since $\gamma_{z_1}$,
$\gamma_{z_2}$ and $[z_1,z_2]$ are simple geodesic arcs which meet only at
their endpoints, it follows that $\gamma_{z_1}\cup\gamma_{z_2}\cup [z_1,z_2]$
is a Jordan arc. We parametrize it by a homeomorphism $f:\mathbb{S}^1-\{
1\}\to \gamma_{z_1}\cup\gamma_{z_2}\cup [z_1,z_2]$ and extend $f(1)=a$. Then
$f$ is a bijection of $\mathbb{S}^1$ and $\partial
U=\gamma_{z_1}\cup\gamma_{z_2}\cup [z_1,z_2]\cup\{ a\}$. Moreover, $f$ is
continuous at $1$ since $\gamma_{z_1}$ and $\gamma_{z_2}$ accumulate to $a$
and therefore $\partial U$ is a Jordan curve.

For $z\in [z_1,z_2]$, let $\gamma_z$ be the ray of the vertical trajectory
with the initial point $z$ that starts in the direction of $U$. Then
$\gamma_z$ never leaves $U$ because it cannot intersect its boundary except
at $z$.  Moreover, the ray $\gamma_z$ cannot contain critical points of
$\varphi$. Indeed, if it does contain a critical point then there exist two
vertical rays starting at the critical point which make a geodesic and whose
both accumulation points on $\mathbb{S}^1$ are equal to $a$. However, a
geodesic must have two different accumulation points (cf. \cite[Theorem 19.4
and Theorem 19.6]{Str}) which gives a contradiction. Therefore every vertical
trajectory in $U$ is non-critical and its full extension accumulates at $a\in
\mathbb{S}^1$ and intersects $[z_1,z_2]$ in exactly one point. Therefore, $U$
is foliated by $\gamma_z$ for $z\in (z_1,z_2)$.

Consider the conformal mapping from $U$ into $\mathbb{C}$ using the natural
parameter $dw^2=\varphi (z)dz^2$. Since $U$ is simply connected and without
zeroes, the natural parameter is conformal on $U$. Caratheodory's theorem (cf. \cite{Pomm}) gives that $w$ homeomorphically maps
 the boundary $\partial U$ of $U$ onto the prime ends of $w(U)$.

Since $\gamma_{z_1'}$ and
$\gamma_{z_2'}$ have finite lengths, it follows that the endpoints $w_1'$ and $w_2'$ of vertical lines
$w(\gamma_{z_1'})$ and $w(\gamma_{z_2'})$ are different in $\partial w(U)$. The arcs $\gamma_{z_1'}$ and
$\gamma_{z_2'}$
define degenerate prime ends, namely prime ends whose imprints are $w_1'$ and $w_2'$. Therefore the prime ends are different since $w_1'\neq w_2'$ (cf. Figure 3).

\begin{figure}
\noindent\makebox[\textwidth]{
\includegraphics[width=12 cm]{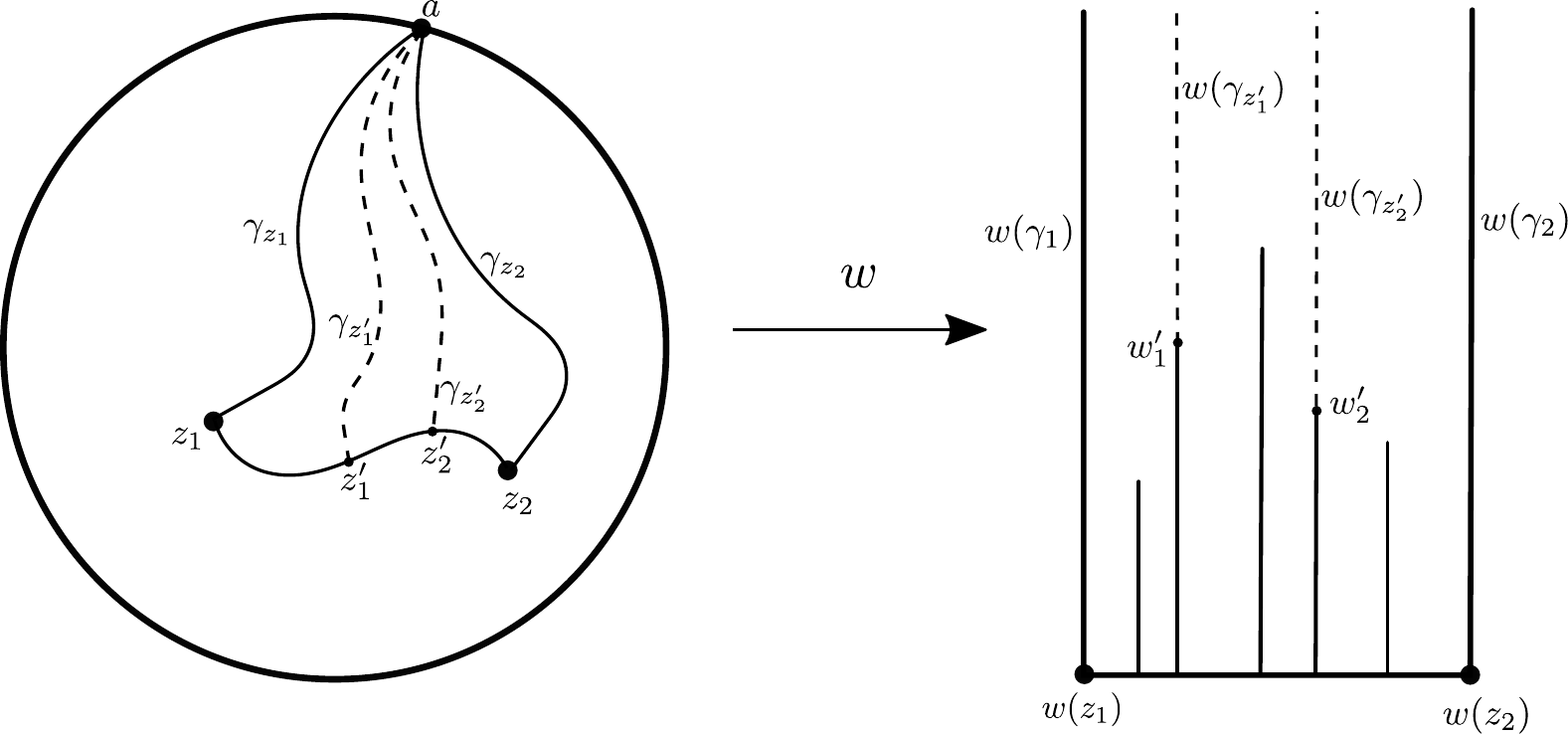}}
\caption{The prime ends of vertical trajectories.}
\end{figure}

This is impossible since $w$ maps $a$ onto
both prime ends.  Contradiction. Thus we
obtained that $\mu_{\varphi}(\{ a\}\times [c,d])=0$.
\end{proof}

Putting the above statements together and using the fact that the asymptotics
of the Liouville currents can be replaced by the asymptotics of the moduli of
curves (cf. Lemma \ref{lem:mod_liouville_measure}) gives

\begin{theorem}
\label{thm:weak*convergence} Let $\varphi$ be an integrable holomorphic
quadratic differential on $\mathbb{D}$ and let $T_{\epsilon}$ be the
Teichm\"uller mapping that shrinks the vertical trajectories of $\varphi$ by
a multiplicative constant $\epsilon >0$. The Teichm\"uller ray
$\epsilon\mapsto T_{\epsilon}$ for $\epsilon >0$ has a unique limit point
$[\mu_{\varphi}]$ on Thurston's boundary $PML_{bdd}(\mathbb{D})$ of
$T(\mathbb{D})$ as $\epsilon \to 0^{+}$, where $[\mu_{\varphi}]$ is the
projective class of a bounded measured lamination $\mu_{\varphi}$
corresponding to $\varphi$.
\end{theorem}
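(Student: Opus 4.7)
The plan is to identify each $T_\epsilon$ with its associated bounded geodesic current $T_\epsilon^{*}(\mathcal{L})$, the pullback of the Liouville measure $\mathcal{L}$ by the quasisymmetric boundary extension of $T_\epsilon$. Convergence of $T_\epsilon$ to $[\mu_\varphi]$ in Thurston's closure amounts to producing a positive scaling $\lambda_\epsilon\to 0$ such that $\lambda_\epsilon\,T_\epsilon^{*}(\mathcal{L})$ converges weak$^{*}$ to a positive multiple of $\mu_\varphi$. I would take $\lambda_\epsilon=\epsilon$ and show that $\epsilon\,T_\epsilon^{*}(\mathcal{L})\to \pi\,\mu_\varphi$ in the weak$^{*}$ topology on geodesic currents.

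The central estimate is carried out on a generic box $[a,b]\times[c,d]$. Denoting by $h_\epsilon$ the boundary homeomorphism of $T_\epsilon$, we have $T_\epsilon^{*}(\mathcal{L})([a,b]\times[c,d])=\mathcal{L}(h_\epsilon([a,b])\times h_\epsilon([c,d]))$. Since the modulus of a curve family in $\mathbb{D}$ joining two boundary arcs depends only on the cross-ratio of their endpoints, $\m(T_\epsilon(\Gamma_{[a,b]\times[c,d]}))$ equals $\m(\Gamma_{h_\epsilon([a,b])\times h_\epsilon([c,d])})$. By Theorem \ref{thm:main}, $\epsilon\cdot\m(T_\epsilon(\Gamma_{[a,b]\times[c,d]}))\to \mu_\varphi([a,b]\times[c,d])>0$, so in particular $\m(T_\epsilon(\Gamma_{[a,b]\times[c,d]}))\to\infty$ as $\epsilon\to 0^{+}$. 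Lemma \ref{lem:mod_liouville_measure} then gives
\[
\pi\,\m(T_\epsilon(\Gamma_{[a,b]\times[c,d]}))=T_\epsilon^{*}(\mathcal{L})([a,b]\times[c,d])+2\log 4+o(1),
\]
and multiplying by $\epsilon$ and passing to the limit yields
\[
\lim_{\epsilon\to 0^{+}}\epsilon\,T_\epsilon^{*}(\mathcal{L})([a,b]\times[c,d])=\pi\,\m(\Gamma_v([a,b],[c,d]))=\pi\,\mu_\varphi([a,b]\times[c,d])
\]
for every box. This is the pointwise convergence of the rescaled currents on the generating $\pi$-system of boxes.

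The final step upgrades convergence on boxes to weak$^{*}$ convergence. Proposition \ref{prop:common_endpoints} shows that $\mu_\varphi(\{a\}\times[c,d])=\mu_\varphi([a,b]\times\{c\})=0$ for every relevant $a,c,[a,b],[c,d]$, so the topological boundary of every box in $\mathbb{S}^{1}\times\mathbb{S}^{1}-diag$ has $\mu_\varphi$-measure zero. Combined with the uniform bound $\|\mu_\varphi\|_{Th}<\infty$ from Proposition 1 and the analogous uniform boundedness of $\epsilon\,T_\epsilon^{*}(\mathcal{L})$ on boxes of cross-ratio $2$ (which follows from $T_\epsilon^{*}(\mathcal{L})$ being a bounded current and the scaling), a Portmanteau-type argument converts the pointwise limit on boxes into weak$^{*}$ convergence of Radon measures on $\mathbb{S}^{1}\times\mathbb{S}^{1}-diag$. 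This yields $\epsilon\,T_\epsilon^{*}(\mathcal{L})\to \pi\,\mu_\varphi$ weak$^{*}$ and hence projective convergence $T_\epsilon\to [\mu_\varphi]$ in $T(\mathbb{D})\cup PML_{bdd}(\mathbb{D})$.

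The part I expect to require the most care is the Portmanteau upgrade in the locally compact, non-separable setting of bounded geodesic currents: one must justify that indicator functions of boxes can be approximated by continuous compactly supported test functions against all the currents $\epsilon\,T_\epsilon^{*}(\mathcal{L})$ simultaneously, using only absence of $\mu_\varphi$-mass on box boundaries and uniform local bounds. Once this is in place, the numerical identity above—whose substantive ingredients are exactly the modulus/Liouville asymptotic (Lemma \ref{lem:mod_liouville_measure}) and the main modulus limit (Theorem \ref{thm:main})—delivers the weak$^{*}$ limit, and hence the uniqueness of the endpoint of every Teichm\"uller ray on Thurston's boundary.
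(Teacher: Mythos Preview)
Your approach is essentially the paper's: the paper's proof is a one-line appeal to Theorem~\ref{thm:main}, Proposition~\ref{prop:common_endpoints}, and Lemma~\ref{lem:mod_liouville_measure}, followed by the Thurston-boundedness argument $\mu_\varphi([a,b]\times[c,d])=\m(\Gamma_v)\le \m(\Gamma)\le C$ on boxes of Liouville measure $\log 2$; you have simply unpacked the Portmanteau step that the paper leaves implicit.

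One small wrinkle to fix: you assert $\mu_\varphi([a,b]\times[c,d])>0$ for every box in order to force $\m(T_\epsilon(\Gamma))\to\infty$, but this need not hold (there may be no vertical trajectories joining $[a,b]$ to $[c,d]$). The conclusion survives anyway: if along some subsequence $\m(T_\epsilon(\Gamma))$ stays bounded then so does $\mathcal{L}(h_\epsilon([a,b])\times h_\epsilon([c,d]))$, whence $\epsilon\cdot\mathcal{L}\to 0=\pi\mu_\varphi([a,b]\times[c,d])$; and on any subsequence where it diverges, Lemma~\ref{lem:mod_liouville_measure} applies and Theorem~\ref{thm:main} again gives the limit $0$. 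Also, your justification of the uniform bound on $\epsilon\,T_\epsilon^{*}(\mathcal{L})$ should be made explicit via the $\epsilon^{-1}$-quasiconformality of $T_\epsilon$, which gives $\epsilon\,\m(T_\epsilon(\Gamma))\le \m(\Gamma)\le C$ on boxes of fixed Liouville measure.
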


\begin{proof}
The convergence $T_\epsilon\to [\mu_{\varphi}]$ as $\epsilon\to 0^{+}$ in the
weak* topology on measures follows immediately from Theorem \ref{thm:main}, Proposition \ref{prop:common_endpoints} and Lemma \ref{lem:mod_liouville_measure}. It remains to be proved that
$\mu_{\varphi}$ is Thurston bounded.

Note that by the definition the measured lamination $\mu_{\varphi}$ is
independent under multiplication of $\varphi$ by positive constants. Let
$[a,b]\times [c,d]$ be such that its Liouville measure satisfies
$$
\mathcal{L}([a,b]\times [c,d])=\log 2.
$$
Denote by $\Gamma ([a,b],[c,d])$ the family of all rectifiable arcs in
$\mathbb{D}$ that have one endpoint in $[a,b]$ and other endpoint in $[c,d]$.
Then
$$
\m(\Gamma ([a,b],[c,d]))\leq const
$$
for all $\mathcal{L}([a,b]\times [c,d])=\log 2.$ Since $\Gamma_v([a,b],[c,d])\subset
\Gamma ([a,b],[c,d])$, we have that
$$
\mu_{\varphi}([a,b]\times [c,d])=\m(\Gamma_v ([a,b],[c,d]))\leq const
$$
and $\|\mu_{\varphi}\|_{Th}<\infty$.
\end{proof}

\section{A counter-example to uniform weak* convergence}
\label{sec:ce} Let $\{\alpha_n\}_n$ and $\alpha$ be geodesic currents on the
space of geodesics $G(\mathbb{H})=\mathbb{S}^1\times \mathbb{S}^1-diag$ of
the hyperbolic plane $\mathbb{H}$. Namely, $\{\alpha_n\}_n$ and $\alpha$ are
positive Radon measures on $\mathbb{S}^1\times \mathbb{S}^1-diag$. We say
that $\alpha_n$ converges to $\alpha$ in the {\it uniform} weak* topology
(cf. \cite{Sa2}) if for every continuous $f:\mathbb{S}^1\times
\mathbb{S}^1-diag\to\mathbb{R}$ with support in the standard box $[1,i]\times
[-1,-i]$ we have

$$
\sup_{[a,b]\times [c,d]}|\int_{[1,i]\times [-1,-i]}f[d\gamma_{[a,b]\times [c,d]}^{*}(\alpha_n-\alpha )]|\to 0
$$
as $n\to\infty$, where the supremum is over all boxes $[a,b]\times [c,d]$ of Liouville measure $\log 2$ and $\gamma_{[a,b]\times [c,d]}$ is the M\"obius map taking the standard box onto $[a,b]\times [c,d]$.

We note that as $[a,b]\times [c,d]$ runs through all boxes of Liouville
measure $\log 2$, $\gamma_{[a,b]\times [c,d]}$ runs through all M\"obius maps
of $\mathbb{D}$, which implies that the above supremum can be taken over the
space $Mob(\mathbb{D})$ of all M\"obius maps that preserve the unit disk
$\mathbb{D}$. Moreover, since any continuous $f:[a_0,b_0]\times
[c_0,d_0]\to\mathbb{R}$ with $\mathcal{L}([a_0,b_0]\times [c_0,d_0])=\log 2$
can be pulled back to a continuous $f\circ\gamma_{[a_0,b_0]\times
[c_0,d_0]}:[1,i]\times [-1,-i]\to\mathbb{R}$ and since the above supremum is
over all M\"obius maps, we do not need to restrict to continuous functions
with supports on the standard box, but rather to continuous maps with supports
in any box with Liuoville measure $\log 2$. In addition, if a continuous
$f:\mathbb{S}^1\times \mathbb{S}^1-diag\to\mathbb{R}$ has a compact support
then it can be written as a finite sum of continuous functions with supports
in boxes of Liouville measures $\log 2$. Therefore

\begin{definition} A sequence of geodesic currents $\{ \alpha_n\}_n$ converges in the {\it uniform} weak* topology to $\alpha$ if for every continuous function $f:\mathbb{S}^1\times \mathbb{S}^1-diag\to\mathbb{R}$ with compact support
$$
\sup_{\gamma\in Mob(\mathbb{D})}|\int_{\mathbb{S}^1\times \mathbb{S}^1-diag}f[d\gamma^{*}(\alpha_n-\alpha )]|\to 0
$$
as $n\to\infty$.
\end{definition}

This definition is equivalent to the first definition using boxes of Liouville measure $\log 2$.

Assume that $\alpha_n$ converges to $\alpha$ in the weak* topology. Below we formulate a sufficient condition guaranteeing that $\alpha_n$ does not converge to $\alpha$ in the uniform weak* topology. Given $\delta >0$, assume that there exist $C_1,,C_2,C_3$ and a sequence of boxes $Q_k=[a_k,b_k]\times [c_k,d_k]$ and sub-boxes $Q_k'=[a_k',b_k']\times [c_k',d_k']$ compactly contained in the interior of $Q_k$ such that
\begin{equation}
\label{eq:constant-measure}
\mathcal{L}(Q_k)\leq C_1,
\end{equation}

\begin{equation}
\label{eq:bounded-measure}
\mathcal{L}(Q_k')\geq C_2>0,
\end{equation}

\begin{equation}
\label{eq:central}
\begin{split}
\min\{\mathcal{L}([a_k,a_k']\times [c_k,d_k]),\mathcal{L}([b_k',b_k]\times [c_k,d_k]),\ \ \ \ \ \ \ \ \ \\
\mathcal{L} ([a_k,b_k]\times [c_k,c_k']), \mathcal{L}([a_k,b_k]\times [d_k',d_k])\}\geq\delta >0,
\end{split}
\end{equation}

\begin{equation}
\label{eq:lower-bound}
\alpha_{n_k}(Q_k')\geq C_3>0,
\end{equation}
for some $n_k$ with $n_k\to\infty$ as $k\to\infty$, and
\begin{equation}
\label{eq:zero-measure}
\alpha (Q_k)\to 0,
\end{equation}
as $k\to\infty$,
where $C_1'$, $C_1''$, $C_2$ and $C_3$ are independent of $k$ and $\delta$ .

We now establish that $\alpha_n$ does not converge to $\alpha$ in the uniform weak* topology if the above conditions are satisfied. Let $Q=[a,b]\times [c,d]$ be a fixed box with $\mathcal{L}(Q)=C_1$ and let $Q'=[a',b']\times [c',d']$ be a box compactly contained in the interior of $Q$ such that
\begin{equation}
\label{eq:dist-bdry}
\begin{split}
\mathcal{L}([a,a']\times [c,d])=\mathcal{L}([b',b]\times [c,d])=\ \ \ \ \\
\mathcal{L} ([a,b]\times [c',c])=\mathcal{L}([a,b]\times [d',d])=\delta .
\end{split}
\end{equation}
Let $\gamma_k\in Mod(\mathbb{D})$ be such that $\gamma_k(Q)\supseteq Q_{k}$.
Then (\ref{eq:dist-bdry}) and (\ref{eq:central}) imply that
$\gamma_k(Q')\supseteq Q_{k}'$. Let $f:\mathbb{S}^1\times
\mathbb{S}^1-diag\to\mathbb{R}$ be a continuous functions such that the
support of $f$ is contained in $Q$, $0\leq f\leq 1$ and $f|_{Q'}=1$. Then by
(\ref{eq:lower-bound}) and (\ref{eq:zero-measure}) we have
$$
\int_{\mathbb{S}^1\times \mathbb{S}^1-diag}fd\gamma_k^{*}[\alpha_{n_k}-\alpha ]\geq C_3-\alpha (Q_{k})>C_3/2>0
$$
when $n_k$ is large, which implies that $\alpha_{n_k}$ does not converge in the uniform weak* topology to $\alpha$. Since the uniform weak* convergence implies the weak* convergence and since $\alpha_n$ converges in the weak* topology to $\alpha$, it follows that $\alpha_n$ does not converge to any geodesic current in the uniform weak* topology.

We find an example of an integrable holomorphic quadratic differential
$\varphi$ on the unit disk $\mathbb{D}$ such that the corresponding
Teichm\"uller ray $T_{\epsilon}$ does not converge in the uniform weak*
topology to $\mu_{\varphi}$ while Theorem \ref{thm:weak*convergence}
established that it does converge to $\mu_{\varphi}$ in the weak* topology.
The differential $\varphi$ is constructed by taking the pull back of $dz^2$
on the domain $D$ in the lemma below under the Riemann mapping. For
simplicity of notation, we denote by $T_{\epsilon}:\mathbb{S}^1\to
\mathbb{S}^1$ the boundary map of the Teichm\"uller geodesic ray
$T_{\epsilon}$. The above criterion is used for the family of Liouville
currents $\alpha_{\epsilon}:=\epsilon T_{\epsilon}^{*}(\mathcal{L})$ when
$\epsilon\to 0^{+}$ and the weak* limit $\mu_{\varphi}$. The conditions
(\ref{eq:constant-measure}), (\ref{eq:bounded-measure}), (\ref{eq:central}),
(\ref{eq:lower-bound}) and (\ref{eq:zero-measure}) are replaced by equivalent
conditions in terms of the moduli of the families of curves connecting two
intervals on $\mathbb{S}^1$ defining the box of geodesics.

\begin{lemma}\label{lemma:counterexample}
  There is a domain $D\subset\mathbb{C}$ of finite area with the following properties. There exist constants $0<C_1',C_2',C_3',\delta' <\infty$, a sequence of arcs $[a_k,b_k], [c_k,d_k]$ and sub-arcs $[a_k',b_k'],[c_k',d_k']$ on $\mathbb{S}^1$
%corresponding to pairs of continua $E_k, F_k$ and $E_k',F_k'$ in $\partial{D}$ under the  Riemann mapping of $D$
and a sequence $\eps_k>0$ approaching $0$ such that with notations as above we have
  \begin{itemize}
    \item[(\textit{a}).] $ \m ([a_k,b_k], [c_k,d_k];\mathbb{D})\leq C_1', \forall k\in\mathbb{N}$
    \item[(\textit{b}).] $ \m ([a_k',b_k'], [c_k',d_k'];\mathbb{D})\geq C_2', \forall k\in\mathbb{N}$
     \item[(\textit{c}).] $\min\{\m ([a_k,a_k'], [c_k,d_k];\mathbb{D}),\m ([b_k',b_k], [c_k,d_k];\mathbb{D}), \\
\m ([a_k,b_k], [c_k,c_k'];\mathbb{D}), \m ([a_k,b_k], [d_k',d_k];\mathbb{D})\}\geq\delta' >0,$
    \item[(\textit{d}).] $\mu_{\varphi} ([a_k,b_k]\times [c_k,d_k])\to0, \mbox{ as } k\to\infty$
    \item[(\textit{e}).]
        $\eps_k \cdot \m (T_{\epsilon_k}^{\varphi}([a_k',b_k']), T_{\epsilon_k}^{\varphi}([c_k',d_k']);T_{\epsilon_k}^{\varphi}(\mathbb{D}))\geq C_3'.$
  \end{itemize}
\end{lemma}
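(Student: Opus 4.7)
The plan is to build the domain $D$ explicitly with multi-scale geometry and then verify the five conditions in order, after which the criterion stated just before the lemma (applied to $\alpha_{\epsilon}=\epsilon T_\epsilon^{*}(\mathcal{L})$ with weak* limit $\mu_\varphi$) produces the failure of uniform weak* convergence. Concretely, I would take $D$ to be a simply-connected planar domain of finite area consisting of a bounded base region together with a sequence of disjoint ``chambers'' $P_k$ attached along its boundary; each $P_k$ is designed to have vertical extent $h_k\to\infty$ and is equipped with a ``bypass arm'' providing a family of short non-vertical rectifiable curves between two designated arcs of $\partial P_k$. Let $\psi\colon\mathbb{D}\to D$ be the Riemann map and set $\varphi=\psi^{*}(dz^2)$; vertical trajectories of $\varphi$ are preimages of vertical lines of $D$, and those that lie in $P_k$ have $\varphi$-length at least $h_k$.

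For each $k$, I identify $[a_k,b_k]$ and $[c_k,d_k]$ with the $\psi$-preimages of two boundary arcs of $P_k$ chosen so that (i) vertical trajectories with endpoints in these two arcs are trapped in $P_k$, and (ii) the two arcs are joined in $D$ by the bypass arm via short non-vertical curves. The sub-arcs $[a_k',b_k']$, $[c_k',d_k']$ are taken nested well inside the outer arcs with Liouville-distance at least $\delta$ to their endpoints, which directly gives condition (\textit{c}). Condition (\textit{a}) follows from Lemma \ref{lemma:mod-reldist} once the relative distance of the two arcs in $\mathbb{D}$ (equivalently, by conformal invariance, of the corresponding continua in $D$) is seen to be bounded below uniformly in $k$. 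Condition (\textit{b}) is verified by exhibiting an explicit admissible metric for $\Gamma_k':=\Gamma([a_k',b_k'],[c_k',d_k'];\mathbb{D})$ concentrated on the bypass arm of $P_k$ --- e.g.\ a multiple of the characteristic function of the arm divided by its Euclidean diameter --- giving a uniform lower bound $C_2'>0$.

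Condition (\textit{d}) follows from Proposition \ref{prop:vertical_mod_measure}: since every vertical trajectory contributing to $\mu_\varphi([a_k,b_k]\times[c_k,d_k])$ passes through $P_k$ and so has $\varphi$-length at least $h_k$, and the $\varphi$-width of $P_k$ is a bounded constant $W_k$, we obtain $\mu_\varphi([a_k,b_k]\times[c_k,d_k])\leq W_k/h_k\to 0$. For condition (\textit{e}), define $g_k(\epsilon):=\epsilon\cdot\m(T_\epsilon^\varphi(\Gamma_k'))$. By Theorem \ref{thm:main}, $\lim_{\epsilon\to 0^+} g_k(\epsilon)=\mu_\varphi(Q_k')\leq\mu_\varphi(Q_k)\to 0$ as $k\to\infty$, while $g_k(1)=\m(\Gamma_k')\geq C_2'$. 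Continuity of $g_k$ in $\epsilon$ (via continuity of $T_\epsilon^\varphi$ in the Teichm\"uller metric together with semicontinuity properties of the modulus) then produces $\epsilon_k\in(0,1)$ with $g_k(\epsilon_k)=C_2'/2=:C_3'$.

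The main obstacle is twofold. First, each chamber $P_k$ must simultaneously satisfy (\textit{b}) and (\textit{d}); since vertical trajectories form a sub-family of all rectifiable curves, achieving $\m(\Gamma_k')\geq C_2'$ while $\mu_\varphi(Q_k)\to 0$ forces the bypass arm to be oriented essentially horizontally in the natural parameter of $\varphi$, so that it contributes substantially to the modulus of $\Gamma_k'$ but negligibly to the vertical trajectory measure. Second, one must prove that the $\epsilon_k$ produced above actually tend to $0$; for this it suffices to show that for every fixed $\epsilon_0>0$ one has $g_k(\epsilon_0)<C_3'$ for all $k$ sufficiently large, which in turn dictates how fast $h_k\to\infty$ must grow relative to the horizontal dimensions of $P_k$ and of the bypass arm, and is where the detailed geometric estimates on $D$ will enter.
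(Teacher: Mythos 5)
Your overall strategy --- a domain with designated regions where short non-vertical curves keep $\m(\Gamma_k')$ bounded below while the vertical trajectories through those regions carry vanishing modulus, followed by a choice of $\eps_k$ realizing (\textit{e}) --- is the right one, and (\textit{a}), (\textit{c}), (\textit{d}) are handled essentially as in the paper (relative distance via Lemma \ref{lemma:mod-reldist} for (\textit{a}), the length/width formula of Proposition \ref{prop:vertical_mod_measure} for (\textit{d})). But there are genuine gaps. First, in (\textit{b}) you propose to get a \emph{lower} bound on $\m(\Gamma_k')$ by ``exhibiting an explicit admissible metric''; admissible metrics only give \emph{upper} bounds, $\m(\Gamma)\le\int\rho^2$. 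A lower bound requires either a length-area argument over a family of disjoint curves, or --- as the paper does --- passing to the conjugate family of separating curves, bounding \emph{its} modulus from above (Lemma \ref{lemma:slits}), and using $\m((E,F;D)^t)=1/\m(E,F;D)$. Relatedly, you correctly identify that reconciling (\textit{b}) with (\textit{d}) is the main obstacle, but your ``bypass arm'' is never specified concretely enough to check that it resolves it. The paper's resolution is a comb: over $F_k=[2^{-k},2^{-k+1}]$ it inserts $2^k+1$ vertical slits descending from the top edge to height $2^{-k}$; curves of length about $2^{-k}$ from $F_k'$ to the nearby slit tips keep $\m(\Gamma_k')\ge 1/3$, while every vertical trajectory from $F_k$ runs the full unit height, so $\mu_\varphi(Q_k)=|F_k|=2^{-k}\to 0$. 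Note also that chambers of vertical extent $h_k\to\infty$ and width $W_k$ need $\sum_k W_kh_k<\infty$ for $D$ to have finite area.

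Second, your intermediate-value argument for (\textit{e}) is incomplete at its crucial point. Continuity of $g_k(\eps)=\eps\cdot\m(T_\eps(\Gamma_k'))$ can be salvaged (not from ``semicontinuity of the modulus,'' which goes the wrong way for an IVT, but from quasi-invariance: $T_\eps\circ T_{\eps'}^{-1}$ is $\max(\eps/\eps',\eps'/\eps)$-quasiconformal, so $\log\m(T_\eps(\Gamma_k'))$ is Lipschitz in $\log\eps$), and the IVT then produces some $\eps_k$ with $g_k(\eps_k)=C_3'$. But the counterexample criterion preceding the lemma requires $\eps_k\to 0$, and your proof of that --- showing $g_k(\eps_0)<C_3'$ for each fixed $\eps_0$ and all large $k$ --- is exactly the hard geometric estimate you defer; quasi-invariance alone only gives $\eps_0^2\,\m(\Gamma_k')\le g_k(\eps_0)\le\m(\Gamma_k')$, which does not force $g_k(\eps_0)$ below $C_3'$. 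The paper avoids this entirely by taking $\eps_k=2^{-k}$ explicitly and verifying (\textit{e}) directly: after rescaling $T_{\eps_k}(F_k'\times[0,1])$ by $z\mapsto\eps_k^{-1}(z-2^{-k})$, Lemma \ref{lemma:slits} bounds the conjugate family's modulus by $3\eps_k$, whence $\eps_k\,\m(T_{\eps_k}(\Gamma_k'))\ge 1/3$. As written, your argument does not establish (\textit{b}) or (\textit{e}).
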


\begin{remark}
We would like to emphasize again that in the lemma above $\varphi$ denotes
the quadratic differential which is the pullback of $dz^2$ under the Riemann
map of $D$ and $T_{\epsilon}^{\varphi}$ is the corresponding Teichm\"uller
mapping. The boxes $[a_k,b_k]\times [c_k,d_k]$ and $[a_k',b_k']\times
[c_k',d_k']$ on $\mathbb{S}^1$ under the Riemann mapping correspond to boxes
in $D$, and this correspondence is implicitly assumed.
\end{remark}

%\begin{lemma}\label{lemma:counterexample}
%  There is a domain $D\subset\mathbb{C}$ of finite area, a sequence of
%continua $E_k,E_k',F_k,F_k'\subset \partial{D}$, with $E_k'$ and $F_k'$  compactly contained in $E_k$ and $F_k$ respectively, and constants $C_1\geq 1$, $C_2,C_3>0$  such that the curve families
%  $\G_k:=(E_k,F_k;D)$  and $\G_k': =(E_k',F_k';D)$ satisfy the following conditions:
%  \begin{itemize}
%    \item[(\textit{a}).] $C_1^{-1}\leq \m\G_k'\leq \m\G_k \leq C_1, \forall k\in\mathbb{N}$
%    \item[(\textit{b}).]$\displaystyle \lim_{\eps\to0} \eps\cdot\m T_{\eps}(\G_k) = 0$
%     \item[(\textit{c}).]  $\displaystyle
%        \eps_k \cdot \m(T_{\eps_k}(\G_k'))\geq C_2,$ for some sequence $\eps_k$ approaching $0$.
%  \end{itemize}
% Moreover, denoting by $E_k^i$ and $F_k^i$, $i\in\{1,2\}$, the two nonempty components of $E_k\setminus E_k'$ and $F_k\setminus F_k'$, respectively, we have
% \begin{itemize}
% \item[(\textit{d}).] $\m (E_k^i,F_k^j;D) \geq C_3\,$
%% \begin{align}
%% \m (E_k^i,F_k^j;D) \geq C_4\,
%% \end{align}
%for all $i,j\in\{1,2\}$ and all $k\in\mathbb{N}$.
% \end{itemize}
% \end{lemma}

\begin{proof}[Proof of Lemma \ref{lemma:counterexample}]

Below we will define the domain $D$ as well as a sequence of continua
$E_k,E_k'$,$F_k,F_k'\subset \partial{D}$, which are the preimages of the
intervals $[a_k,b_k],[a_k',b_k'],[c_k,d_k],[c_k',d_k']\subset \mathbb{S}^1$
under the Riemann mapping of $D$. In particular $E_k'\subset E_k$ and
$F_k'\subset F_k$. Moreover, instead of estimating the moduli of the curve
families in the unit disc $\mathbb{D}$, we will obtain the estimates in $D$.
To simplify the notation we let
\begin{align*}
\G_k:=(E_k,F_k;D)\,\mbox{ and }\,
\G_k':=(E_k',F_k';D).
\end{align*}
Furthermore, denoting the two nonempty components of $E_k\setminus E_k'$ and $F_k\setminus F_k'$ by $E_k^i$ and $F_k^i$, $i\in\{1,2\}$, respectively, we let
\begin{align*}
\G_k^{i,j}:= (E_k^i,F_k^j;D).
\end{align*}
Just as before, given two continua $E,F\subset\partial D$ we denote by $\G_v(E,F;D)$ the family of vertical curves connecting $E$ and $F$ in $D$.

By conformal invariance of the modulus and Theorem \ref{thm:main}  conditions $(a)-(e)$ are equivalent to the following:

  \begin{itemize}
    \item[$(\textit{a}')$.] $ \m \G_k \leq C_1', \forall k\in\mathbb{N},$
    \item[(\textit{b}$'$).] $ \m \G_k'\geq C_2', \forall k\in\mathbb{N},$
     \item[(\textit{c}$'$).] $\m \G_k^{i,j}\geq\delta', \forall k\in\mathbb{N}, \forall i,j\in\{1,2\},$
    \item[(\textit{d}$'$).] $\displaystyle \lim_{k\to\infty} \m \G_v(E_k,F_k;D) = 0,$
    \item[(\textit{e}$'$).]
        $\eps_k \cdot \m (T_{\epsilon_k}(\G_k'))\geq C_3',\forall k\in\mathbb{N}.$
  \end{itemize}

\noindent Next, we construct the domain $D$ and prove properties $(a')-(e')$.

For $k=1,2\ldots,$ and $j=0,1,\ldots,2^k$ let
$$L_{k,j}:=\left\{\left(\frac{1}{2^k}+\frac{j}{2^{2k}},y\right): \frac{1}{2^k}\leq y\leq 1\right\}.$$
Define
$$D:= [0,1]^2 \setminus \bigcup_{k=1}^{\infty}\bigcup_{j=0}^{2^k} L_{k,j}.$$

\begin{figure}
\includegraphics[width=10cm]{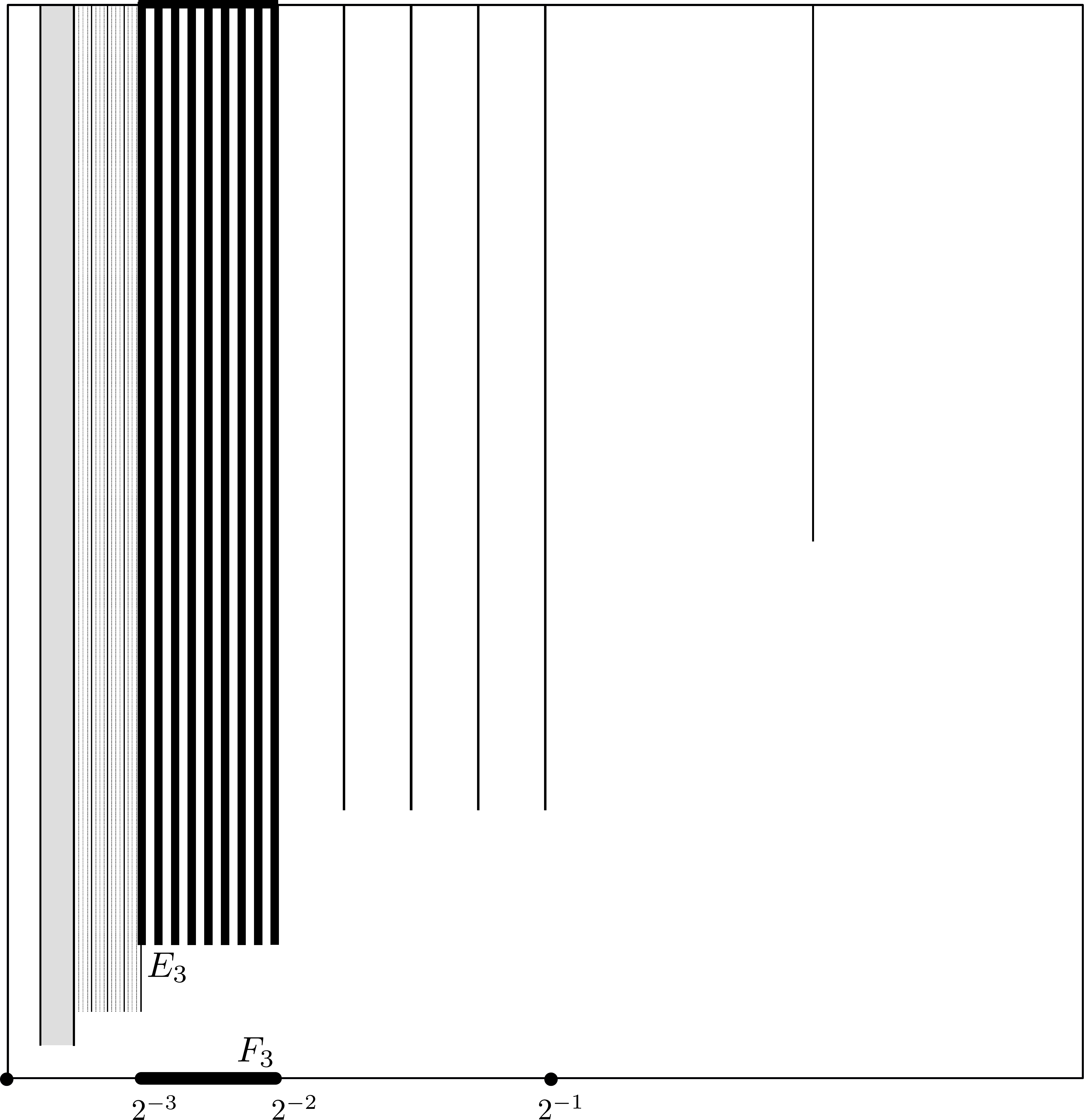}
\caption{The domain $D$. The bold interval is $F_3$, while the part of $\partial{D}$ above it is $E_3$.}
\end{figure}

Now, for $k\geq 1$
let
\begin{align}
F_k=\left[\frac{1}{2^k},\frac{2}{2^k}\right] \quad &\mbox{ and } \quad E_k=\bigcup_{j=0}^{2^k} L_{k,j} \cup \left\{(x,1) : x\in F_k \right\},\\
F_k'=\frac{1}{2}F_k \quad &\mbox{ and } \quad E_k'=\bigcup_{j=\frac{1}{4}2^k}^{\frac{3}{4}2^k} L_{k,j} \cup \left\{(x,1) : x\in F_k' \right\},
\end{align}
where $\frac{1}{2}F_k$ denotes the interval with the same center as $F_k$ but half the length.
%$$F_k=\left[\frac{1}{2^k},\frac{2}{2^k}\right] \mbox{ and } E_k=\bigcup_{j=0}^{2^k} L_{k,j} \cup \left\{(x,1) : x\in F_k \right\}.$$

\textbf{Proof of (\textit{a}$'$).} Since
$$\D(E_k,F_k)=\frac{\dist(E_k,F_k)}{\min\{\diam E_k,\diam F_k \}} =\frac{2^{-k}}{2^{-k}}=1$$ for every $k\geq 1$, by Lemma \ref{lemma:mod-reldist} we have
$$\m(\G_k)=\m(E_k,F_k;D)\leq \m(E_k,F_k;\mathbb{C})\leq \frac{9}{4}\pi.$$

\textbf{Proof of (\textit{b}$'$).}
To estimate $\m\G_k'$ from below we will use conjugate families. Recall that if continua $E,F\subset \partial D$ then the family of curves separating $E$ and $F$ in $D$ is called the family  conjugate to $(E,F;D)$. We will denote by $(E,F;D)^t$ the family conjugate to $(E,F,D)$. The modulus of $(E,F;D)^t$ may be found as follows, see ~\cite{GM}
\begin{align}
\m ((E,F;D)^t) = \frac{1}{\m(E,F;D)}.
\end{align}

\noindent Thus, to estimate $\m\G_k'$ from below we can instead estimate $\m((\G_k')^t)$ from above. Note, that
every curve $\g\in(\G_k')^t$ contains a subcurve $\d$ connecting the two components of $\partial D \setminus (F_k'\cup E_k')$ in the rectangle $F_k'\times[0,1]$, see Fig. \ref{fig:zoom}. Therefore,
\begin{align}
\m (\G_k')^t \leq \m G_k,
\end{align}
where by $G_k$ we denote the family of curves connecting the two components of $\partial (F_k'\times (0,1)) \setminus (F_k'\cup E_k')$ in the rectangle $F_k'\times[0,1]$.  Next we estimate $\m G_k$ using the following result.

\begin{figure}\label{fig:zoom}
\includegraphics[width=10cm]{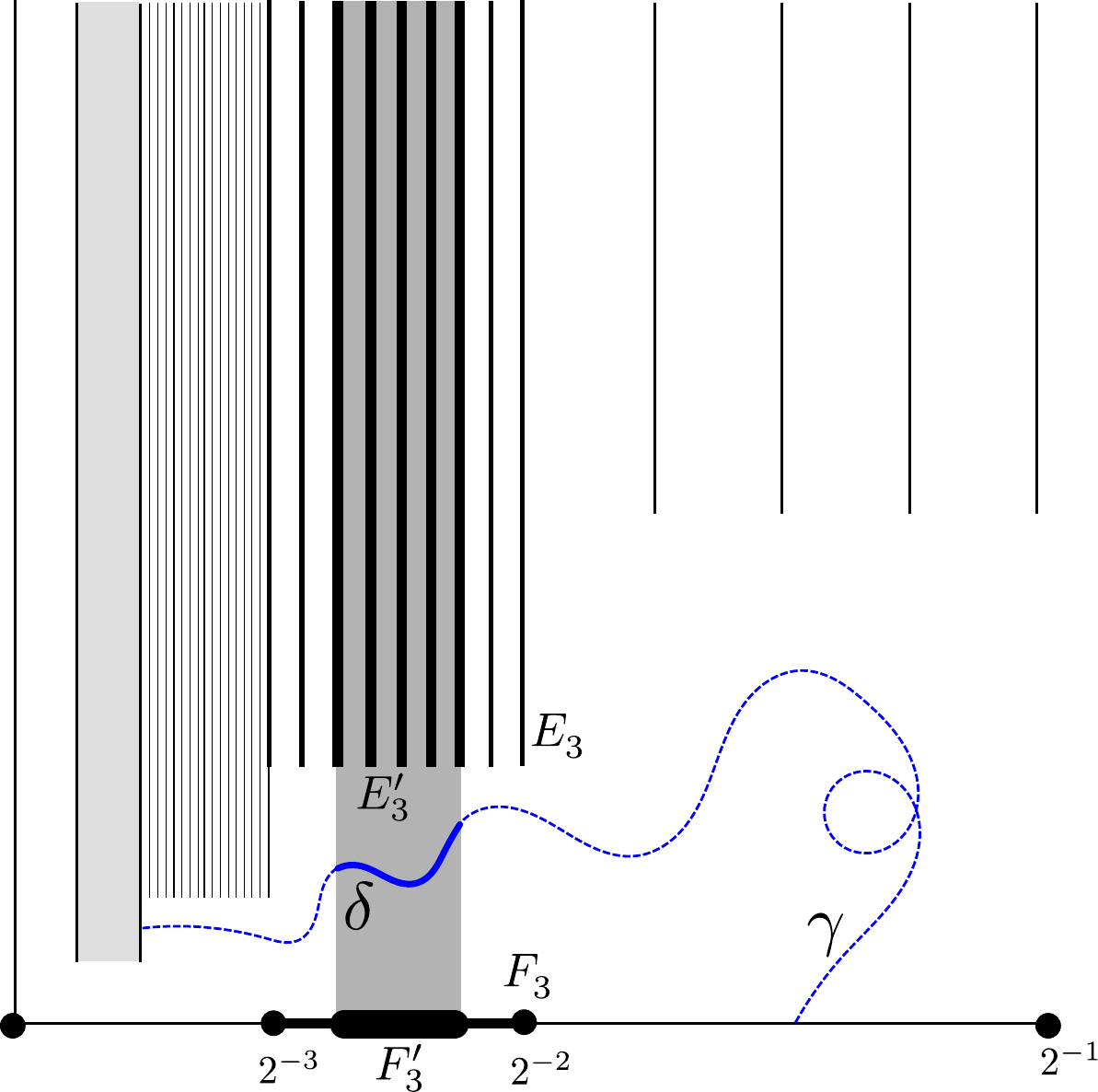}
\caption{Estimating $\m(F_k',E_k';D)$ from below. Every curve $\g\in(\G_k')^t$ separating $E_k'$ from $F_k'$ in $D$ contains a subcurve $\d\in G_k$ which connects the two components of $\partial(F_k'\times(0,1))\setminus (F_k' \cup E_k')$ within  the grey rectangle $F_k'\times(0,1)$.}
\end{figure}

\begin{lemma}\label{lemma:slits}
Let $0<a<b<0$, $0<c<b$ and $N\geq 1$. Denote by $D_N$ the domain
\begin{align*}
D_N=(0,a)\times(0,b)\setminus \bigcup_{i=0}^N \{x_i\} \times [c,1],
\end{align*}
where $x_i=\frac{ai}{N}$ and by $\G_N$ the family of curves in $D_N$ connecting the vertical intervals $(0,ic)$ to $(a,a+ic)$ (see Figure \ref{fig:vertslits}). Then
\begin{align}
\frac{c}{a}\leq\m\G_N\leq \frac{c}{a}+\frac{1}{2N}.
\end{align}
\end{lemma}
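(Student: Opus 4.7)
The plan is to establish the two inequalities separately. For the lower bound $c/a \leq \m\G_N$, I would observe that the sub-rectangle $R=(0,a)\times(0,c)$ is contained in $D_N$ and contains none of the slits $\{x_i\}\times[c,1]$. The family of curves inside $R$ connecting $\{0\}\times(0,c)$ to $\{a\}\times(0,c)$ is a sub-family of $\G_N$, and its modulus equals the standard value $c/a$ (the modulus of a rectangle between its two vertical sides). Monotonicity from Lemma \ref{lemma:modproperties} then gives $\m\G_N \geq c/a$.

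For the upper bound I will build a Lipschitz test function $u:D_N\to[0,1]$ with $u=0$ on $E=\{0\}\times[0,c]$ and $u=1$ on $F=\{a\}\times[0,c]$; since $\int_\gamma |\nabla u|\,|dz|\geq|u(\gamma_{\text{end}})-u(\gamma_{\text{start}})|=1$ for any $\gamma\in\G_N$, the metric $\rho=|\nabla u|$ is admissible, and hence $\m\G_N\leq\int_{D_N}|\nabla u|^2$. I set $u(x,y)=x/a$ on the bottom strip $(0,a)\times(0,c)$; in the upper part $(x_i,x_{i+1})\times(c+\eta,1)$ of each gap I set $u\equiv u_i:=(2i+1)/(2N)$, the $x$-average of $x/a$ over the gap; and in the thin transition layer $(x_i,x_{i+1})\times(c,c+\eta)$ I linearly interpolate in $y$ between these two values via $u(x,y)=(1-s)(x/a)+s u_i$ with $s=(y-c)/\eta$. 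The resulting $u$ is continuous on each connected component of $D_N$ (the slits lie on $\partial D_N$, so any jump across them is harmless), and the Dirichlet conditions on $E,F$ are automatic from $u=x/a$ on the bottom strip.

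The Dirichlet integral then decomposes into three pieces. The bottom strip contributes $c/a$; the constant regions above the transition contribute zero; and a direct computation using the variance identity $\int_{x_i}^{x_{i+1}}(x/a-u_i)^2\,dx=a/(12N^3)$ shows that the transition layer in gap $i$ contributes $\eta/(3aN)+a/(12N^3\eta)$. Summing over the $N$ gaps gives $\eta/(3a)+a/(12N^2\eta)$, which by AM--GM is minimized at $\eta=a/(2N)$ with value $1/(3N)$. Therefore
\begin{align*}
\m\G_N \leq \int_{D_N}|\nabla u|^2 \leq \frac{c}{a} + \frac{1}{3N} \leq \frac{c}{a}+\frac{1}{2N},
\end{align*}
as claimed. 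The main technical point is the choice of the transition-layer thickness $\eta$: too small and the $\partial_y u$ contribution blows up, too large and the $\partial_x u$ contribution dominates. The balance occurs precisely at the scale $\eta\sim a/N$ of the gap width, which is exactly what produces the $O(1/N)$ correction to $c/a$.
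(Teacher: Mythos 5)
Your proof is correct, and for the upper bound it takes a genuinely different route from the paper. The lower bound is identical (monotonicity applied to the sub-family of horizontal curves in the rectangle $(0,a)\times(0,c)$). For the upper bound the paper uses the ``buffer rectangle'' metric $\rho_N=\frac{1}{a}\chi_{R_N\cap D_N}$ with $R_N=(0,a)\times\bigl(0,c+\frac{a}{2N}\bigr)$, and proves admissibility by a length-counting case analysis: in each vertical strip $(x_i,x_{i+1})\times(0,1)$ the piece of $\gamma$ either stays inside $R_N$ and joins the two vertical sides of the strip, or exits through the top of $R_N$ and must contain two sub-arcs each of length at least $a/(2N)$; summing over the $N$ strips gives $l_{\rho_N}(\gamma)\geq 1$ and $\int\rho_N^2=\frac{c}{a}+\frac{1}{2N}$. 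You instead use the Dirichlet-principle (capacity) formulation: build a locally Lipschitz potential $u$ equal to $x/a$ on the bottom strip, locally constant above a transition layer of thickness $\eta$, and take $\rho=|\nabla u|$, which is admissible since curves in $D_N$ cannot cross the slits and hence never see the jump of $u$ across them. Your energy computation is correct ($\frac{\eta}{3aN}+\frac{a}{12N^3\eta}$ per gap, optimized at $\eta=\frac{a}{2N}$), and it actually yields the slightly sharper bound $\frac{c}{a}+\frac{1}{3N}$. The trade-off: the paper's metric is simpler to write down but requires the combinatorial admissibility argument, whereas your potential-function approach makes admissibility essentially automatic at the cost of a (routine) variance computation, and it makes transparent why the correction term lives at the scale $\eta\sim a/N$ of the gap width. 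One shared caveat, inherited from the (typo-ridden) statement of the lemma: both your argument and the paper's implicitly require that the slits reach the top of the rectangle, so that curves cannot pass above them; this is the situation in the application, but if $b>1$ and the slits stopped at height $1$ your locally constant region would be discontinuous across $\{x_i\}\times(1,b)$ and admissibility would fail (and indeed the stated upper bound itself would be false).
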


\begin{proof}
The first estimate follows from the fact that $\G_N$ contains the family connecting the vertical sides in the rectangle $[0,a]\times [0,c]$. To obtain the upper bound consider the rectangle $R_N=(0,a)\times(0,c+\frac{a}{2N})$ and define
\begin{align*}
\rho_N = \frac{1}{a}\chi_{R_N\cap D_N}.
\end{align*}
We next show that $\rho_N$ is admissible for $\G_N$. For that, let $\g\in\G_N$ and let (cf. Figure 6)
\begin{align*}
\g_i = \g\cap ((x_i,x_{i+1})\times(0,1)), \quad i=0,\ldots,N-1.
\end{align*}
Next, we show that $l(\g_i\cap R_N)\geq x_{i+1}-x_i = \frac{a}{N}.$
%
%\begin{align*}
%l(\g_i\cap R_N)\geq x_{i+1}-x_i = \frac{a}{N}.
%\end{align*}
%
Indeed, if $\g_i\cap \partial R_N = \emptyset$ then, since   $\g_i\cap R_N$  is a connected curve connecting the vertical sides of the rectangle $R_N \cap ((x_i,x_i+1)\times(0,1))$, we have $l(\g_i\cap R_N)\geq a/N$.
On the other hand if $\g_i\cap \partial R_N \neq \emptyset$ then there are two connected components $\g_i',\g_i''$ of $\g_i$ which connect the vertical intervals $\{x_{i}\}\times (0,c)$ and $\{x_{i
+1}\}\times(0,c)$ to the horizontal interval $(x_{i},x_{i+1})\times\{c+\frac{a}{(2N)}\}$ in $D_N$, respectively. Since the distance between the aforementioned vertical intervals and the horizontal interval is at least $a/(2N)$ we obtain
\begin{align*}
l(\g_i\cap R_N)\geq l(\g_i')+l(\g_i'')\geq 2\frac{a}{2N} = \frac{a}{N}.
\end{align*}
\begin{figure}\label{fig:vertslits}
\includegraphics[width=10cm]{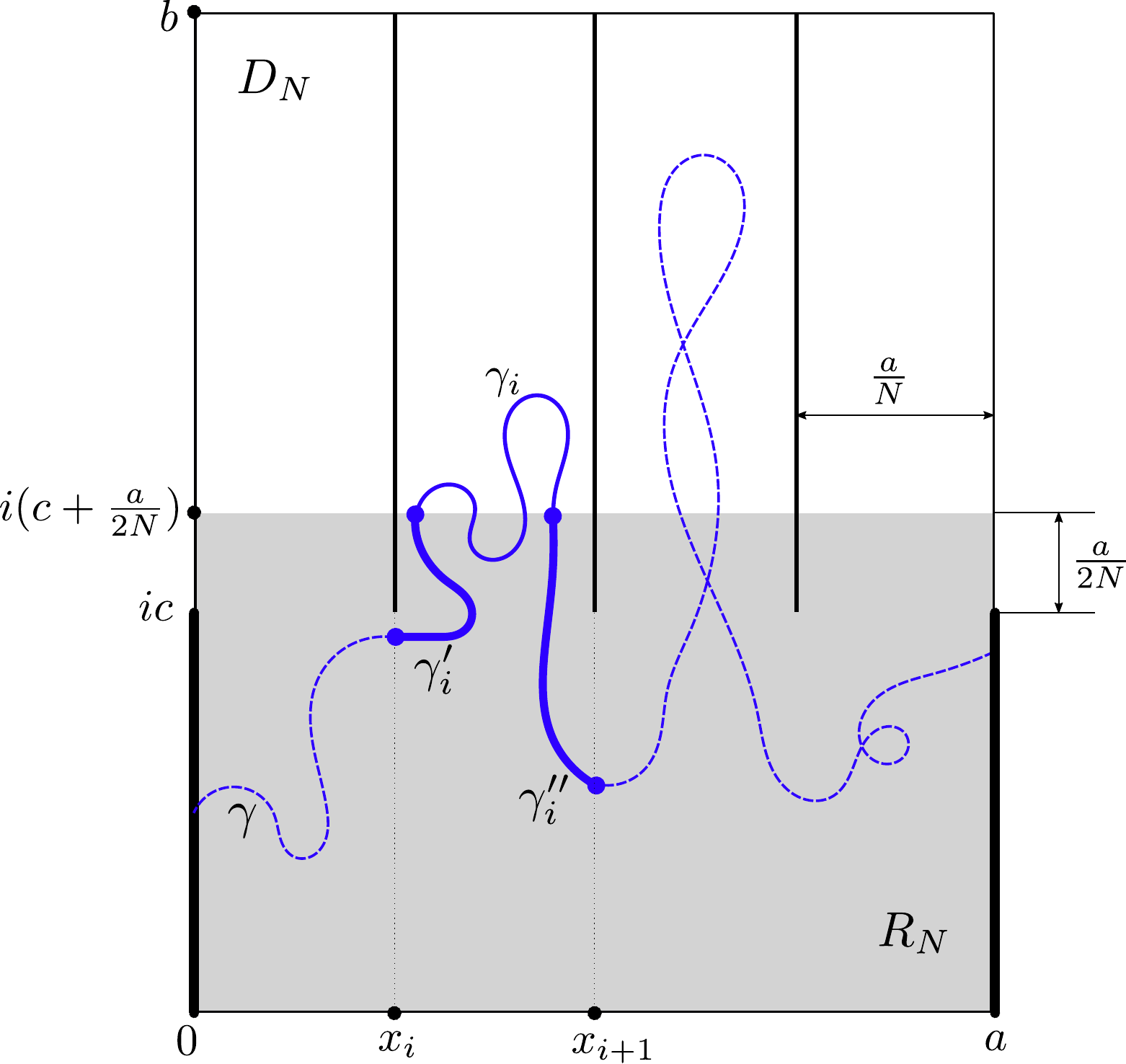}
\caption{}
\end{figure}
Thus we have
\begin{align*}
l_{\rho_N}(\g) \geq \sum_{i=0}^{N-1} l_{\rho_N}(\g_i)= \frac{1}{a} \sum_{i=0}^{N-1} l(\g_i\cap R_N)\geq \frac{1}{a}\cdot N \cdot \frac{a}{N}=1,
\end{align*}
and $\rho_N$ is admissible for $\G_N$. Therefore we can estimate the modulus of $\G_N$ as follows.
\begin{align*}
\m\G_N\leq\int \rho_N^2  = \frac{1}{a^2} |R_N\cap D_N| =\frac{1}{a^2}\cdot a (c+\frac{a}{2N}) = \frac{c}{a} + \frac{1}{2N}. \qquad \qquad \qedhere
\end{align*}
\end{proof}

Using the lemma we see that
\begin{align*}
\m \G_k' \geq \frac{1}{\m((\G_k')^t)} \geq \frac{1}{\m G_k} \geq \frac{1}{\frac{2^{-k}}{2^{-k}/2}+\frac{1}{2\cdot 2^k}} >\frac{1}{3},
\end{align*}
for $k\geq 1$. Which proves $(b')$.

\textbf{Proof of (\textit{c}$'$).}
We start by estimating the modulus of  $\G_k^{1,1}=(E_k^1,F_k^1;D)$. Note that \begin{align*}
F_k^1& = \left(\frac{1}{2^{k}},\frac{1+1/4}{2^k}\right)\\
F_k^2&= \left(\frac{1+3/4}{2^{k}}, \frac{2}{2^{k}}\right),
\end{align*}
while $E_k^1$ and $E_k^2$ are the parts of $E_k$ above $F_k^1$ and $F_k^2$, respectively.

Just like in the proof of $(b')$ we use Lemma \ref{lemma:slits} to obtain the following estimate
\begin{align}
\begin{split}\m(F_k^1,E_k^1,D)
&\geq \m(F_k^1,E_k^1,F_k^1\times[0,1]) \\
&= \frac{1}{\m(F_k^1,E_k^1,F_k^1\times[0,1])^t}
\geq \frac{1}{\frac{2^{-k}}{2^{-k}/4} + \frac{1}{2 (2^k/4)}} \geq \frac{1}{5},
\end{split}
\end{align}
for $k\geq 1$. The same way we also obtain $\m(F_k^2,E_k^2,D) \geq 1/5$.
Next, we estimate $\m(F_k^1,E_k^2,D)$ as follows
\begin{align}
\begin{split}
\m(F_k^1,E_k^2,D)
\geq \m(F_k^1,E_k^2,F_k\times[0,1])= \frac{1}{\m(F_k^1,E_k^2,F_k^1\times[0,1])^t}.
\end{split}
\end{align}
But
\begin{align*}
(F_k^1,E_k^2,F_k^1\times[0,1])^t = (F_k'',E_k'',F_k^1\times[0,1])
\end{align*}
where $E_k''$ and $F_k''$ are the two components of $\partial (F_k\times(0,1))\setminus (F_k \cup E_k)$, or
\begin{align*}
E_k'' &= [2^{-k},(1+i)2^{-k}]\cup E_k^1\cup E_k',\\
F_k'' &= [2^{-k+1},(1+i)2^{-k+1}]\cup F_k'\cup F_k^2.
\end{align*}
Since $\dist(E_k'',F_k'')=\diam F_k^1 = 2^k/4$ and $\diam E_k''\geq\diam F_k''$ we have
\begin{align*}
\D(E_k'',F_k'') = \frac{\diam F_k^1}{\diam F_k''} \geq \frac{2^{-k}/4}{2\cdot 2^{-k}} =\frac{1}{8}.
\end{align*}
Therefore by Lemma \ref{lemma:mod-reldist} we have
\begin{align*}
\m(F_k'',E_k'',F_k^1\times[0,1])\leq \pi(1+4)^2 = 25\pi,
\end{align*}
and we finally obtain
\begin{align*}
\m(F_k^1,E_k^2,D) \geq \frac{1}{25\pi}.
\end{align*}
In the same way we can show that $\m(F_k^2,E_k^1,D))\geq \frac{1}{25\pi}$ and thus
prove $(c')$.

\textbf{Proof of (\textit{d}$'$).} As was shown before we have,
\begin{align*}
\m \G_v (E_k,F_k;D) = \int_{F_k} \frac{dx}{l(x)}
\end{align*}
where in this case $l(x)$ is the Euclidean length of the vertical trajectory passing through $x\in \mathbb{C}$ and integration is with respect to the Lebesgue measure. Thus, since $l(x)=1$ for almost every $x\in F_k$ we obtain
\begin{align*}
\m \G_v (E_k,F_k;D )= |F_k| = \frac{1}{2^k}\to0.
\end{align*}

\textbf{Proof of (\textit{e}$'$).}
Just like above, let $G_k$ be the family of curves connecting the two vertical intervals in $\partial D$ (namely, the two components of the boundary of $\partial (F_k'\times (0,1))\setminus(F_k'\cup E_k'$)). Then $(\G_k')^t$ overflows $G_k$, and the same way we also have $T_{\eps}((\G_k')^t)$ overflows $T_{\eps}(G_k)$ and therefore for every $\eps>0$ we have
\begin{align*}
\m T_{\eps}((\G_k')^t) \leq \m T_{\eps}((G_k')^t).
\end{align*}

Next,  we let $\eps_k = 2^{-k}$ and estimate $\m T_{\eps_k}(G_k)$ from above.
Considering the conformal mapping
$$f_k(z)=\frac{1}{\eps_k}(z-\frac{1}{2^k})$$
and using Lemma \ref{lemma:slits} we obtain that
\begin{align*}
\m T_{\eps_k}(G_k) = \m T_{\eps_k}(f_k(G_k))\leq \frac{\eps_k}{1/2} + \frac{1}{2\cdot (2^k/2)} =3 \eps_k.
\end{align*}

Now, since $T_{\eps_k}(G_k) = (T_{\eps_k}(G_k'))^t$, we obtain
\begin{align*}
\eps_k\m(T_{\eps_k}(\G_k')) = \eps_k \cdot\frac{1}{\m(T((\G_k')^t))} \geq \frac{\eps_k}{\m(T_{\eps_k}(G_k))}\geq \frac{\eps_k}{3\eps_k} = \frac{1}{3}.
\end{align*}

\end{proof}

\section{From integrable holomorphic quadratic differentials to bounded measured laminations}

Let $\varphi$ be an integrable holomorphic quadratic differential on
$\mathbb{D}$ (i.e. a holomorphic function $\varphi :\mathbb{D}\to\mathbb{C}$
such that $\|\varphi\|_{L^1(\mathbb{D})}=\iint_{\mathbb{D}}|\varphi
(z)|dxdy<\infty$). Let ${A}(\mathbb{D})$ be the space of all
integrable holomorphic quadratic differentials on $\mathbb{D}$.

Given $\varphi\in{A}(\mathbb{D})$, we defined a corresponding bounded
measured lamination
$$
\mu_{\varphi}([a,b]\times [c,d])=\m(\Gamma_v ([a,b],[c,d]))
$$
or equivalently
$$
\mu_{\varphi}([a,b]\times [c,d])=\int_I \frac{1}{l(z)}|\sqrt{\varphi (z)}dz|
$$
where $I$ is transverse arc to $\Gamma_v ([a,b],[c,d])$.

It follows that if $c>0$ then $\mu_{c\varphi}=\mu_{\varphi}$. Therefore we
obtain a map from the space $P{A}(\mathbb{D})$ of projective
integrable holomorphic quadratic differentials to the space of projective bounded
measured laminations $PML_{bdd}(\mathbb{D})$,
$$
\mathcal{M}:P{A}(\mathbb{D})\to PML_{bdd}(\mathbb{D}).
$$
We prove that $ \mathcal{M}:P\mathcal{A}(\mathbb{D})\to
PML_{bdd}(\mathbb{D}) $ is injective.

\begin{theorem}
\label{thm:1-1} The map $$ \mathcal{M}:P{A}(\mathbb{D})\to
PML_{bdd}(\mathbb{D}).
$$
 defined by
$$
\mathcal{M} ([\varphi] )=[\mu_{\varphi}]
$$
is injective.
\end{theorem}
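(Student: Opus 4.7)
Suppose $\mathcal{M}([\varphi_1]) = \mathcal{M}([\varphi_2])$, so $\mu_{\varphi_1} = \lambda\, \mu_{\varphi_2}$ for some $\lambda > 0$, and the goal is to show $\varphi_1 = c\, \varphi_2$ for some $c > 0$. My plan is to split the argument into three stages: (i) extract the common support, (ii) upgrade this to an identification of the vertical foliations of $\varphi_1$ and $\varphi_2$ as actual singular foliations of $\mathbb{D}$, and (iii) use the holomorphic rigidity of $\varphi$ to recover $\varphi_1$ from $\varphi_2$ up to a positive scalar. Stage (i) is immediate: two positively proportional measures share the same support, so $v_{\varphi_1} = v_{\varphi_2} =: v$; each regular vertical trajectory of $\varphi_1$ is thus homotopic relative to its endpoints on $\mathbb{S}^1$ to a unique regular vertical trajectory of $\varphi_2$, and the complementary regions of $v$ in $\mathbb{D}$ encode the zeros of both differentials combinatorially.

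Stage (ii) is the crux and I expect it to be the main obstacle. The key ingredient is Remark 2 of the introduction: $\mu_\varphi([a,b] \times [c,d])$ is the conformal modulus of the quadrilateral $R_\varphi([a,b] \times [c,d]) \subset \mathbb{D}$ formed by the union of all vertical trajectories of $\varphi$ with endpoints in $[a,b]$ and $[c,d]$, with marked sides on these two arcs. Although $R_{\varphi_1}(B)$ and $R_{\varphi_2}(B)$ share the same external arcs on $\mathbb{S}^1$, their ``internal'' boundary (the four corner vertical trajectories) \emph{a priori} depends on which differential is used. I would exploit the equality $\mu_{\varphi_1}(B) = \lambda\, \mu_{\varphi_2}(B)$ across \emph{all} boxes $B$, including all sub-boxes, together with the uniqueness of the extremal metric for the family $\Gamma_v(B)$ from Proposition \ref{prop:vertical_mod_measure}, to pin down first the corner trajectories and then each interior leaf; letting $B$ shrink to a single endpoint pair $(a,b) \in v$ forces the vertical trajectory of $\varphi_1$ with endpoints $(a,b)$ to coincide with that of $\varphi_2$ as an actual curve in $\mathbb{D}$. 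Along the way one finds $\lambda = 1$, because the $\mu_{\varphi}$-mass of a single strip is the aspect ratio of its natural-parameter rectangle, which by conformal invariance is determined by the embedding of the strip as a quadrilateral in $\mathbb{D}$ (and hence by $v$ alone, not by the choice of $\varphi$).

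With the vertical foliations of $\varphi_1$ and $\varphi_2$ identified as foliations of $\mathbb{D}$, and with a common strip decomposition $\{S(\beta_i)\}$, stage (iii) is essentially standard. On each simply-connected strip $S(\beta_i)$, both natural parameters $w_j = \int \sqrt{\varphi_j}\, dz$ are holomorphic and the transition map $w_2 \circ w_1^{-1}$ is a conformal map of Euclidean strips which sends vertical lines to vertical lines preserving the horizontal order. The Cauchy--Riemann equations then force this conformal map to be affine, $w \mapsto aw + b$ with $a > 0$ real and $b \in \mathbb{C}$, so $dw_2^2 = a^2\, dw_1^2$, that is, $\varphi_2 = a^2\, \varphi_1$ on the strip. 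Since $\varphi_1$ and $\varphi_2$ are holomorphic on all of $\mathbb{D}$ and the strips cover $\mathbb{D}$ up to a set of measure zero, the identity theorem promotes this local identity to the global relation $\varphi_2 = a^2\, \varphi_1$ on $\mathbb{D}$, yielding $[\varphi_1] = [\varphi_2]$ in $PA(\mathbb{D})$ as required.
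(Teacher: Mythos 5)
Your stages (i) and (iii) reproduce the paper's argument (common support from proportionality of the measures; affine transition maps between natural parameters once the vertical foliations are known to coincide, then the identity theorem). The gap is in stage (ii), which you correctly flag as the crux, and the mechanism you propose for it does not work. First, $\mu_{\varphi}([a,b]\times[c,d])=\m(\Gamma_v([a,b],[c,d]))$ is the modulus of the family of \emph{vertical trajectories only}, not the modulus of the quadrilateral (the family of all curves joining the two arcs). It is therefore not a conformal invariant of the four endpoints, and it is not determined by the geodesic lamination $v$ alone: it depends on the $\varphi$-lengths of the trajectories, i.e.\ on the horizontal foliation as well --- this is precisely Remark~1 of the paper. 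So your claim that the mass of a strip is ``the aspect ratio of its natural-parameter rectangle, determined by the embedding of the strip as a quadrilateral'' is false in general (the natural-parameter image is a region between two graphs, not a rectangle), and the derivation of $\lambda=1$ collapses. Likewise, ``uniqueness of the extremal metric'' from Proposition~\ref{prop:vertical_mod_measure} applies to a \emph{fixed} curve family; the vertical families of $\varphi_1$ and $\varphi_2$ over the same box are different families of curves, so extremality gives no comparison between them. Second, shrinking $B$ to a single endpoint pair yields no information: by Proposition~\ref{prop:common_endpoints} both $\mu_{\varphi_1}(B)$ and $\mu_{\varphi_2}(B)$ tend to $0$, so the hypothesis degenerates to $0=\lambda\cdot 0$ and cannot pin down individual leaves.

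What actually closes stage (ii) in the paper is a quantitative differentiation argument that your proposal is missing. After normalizing so that $\|\varphi\|_{L^1}=\|c_1\varphi'\|_{L^1}$, one works along a transverse horizontal arc $\beta_i$ of $\varphi$, identified with $(0,a_i)$, and compares the modulus function $M^{\varphi'}_{\beta_i}(x)$ with the area function $A^{\varphi'}_{\beta_i}(x)$ of the corresponding $\varphi'$-substrip: the admissible metric $\rho\equiv L_x(\eps)^{-1}$ gives $\frac{d}{dx}M^{\varphi'}_{\beta_i}(x)\leq \frac{d}{dx}A^{\varphi'}_{\beta_i}(x)\,/\,[l^{\varphi}(v^{\varphi'}_{\beta_i}(x))]^2$ a.e.\ (Lemma~\ref{lem:weak}). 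Combining this with $\frac{d}{dx}M^{\varphi}_{\beta_i}=c_1\frac{d}{dx}M^{\varphi'}_{\beta_i}$ and with the geodesic property of vertical trajectories in the $\varphi$-metric, which gives $l^{\varphi}(v^{\varphi}_{\beta_i}(x))\leq l^{\varphi}(v^{\varphi'}_{\beta_i}(x))$, one obtains $\frac{d}{dx}A^{\varphi}_{\beta_i}\leq c_1\frac{d}{dx}A^{\varphi'}_{\beta_i}$; integrating and summing over all strips produces $\|\varphi\|_{L^1}\leq\|c_1\varphi'\|_{L^1}$ with equality forced by the normalization. Equality propagates back to give $l^{\varphi}(v^{\varphi'}_{\beta_i}(x))=l^{\varphi}(v^{\varphi}_{\beta_i}(x))$ a.e., and uniqueness of $\varphi$-geodesics between boundary points then identifies the leaves. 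Without this global $L^1$-budget argument (or a substitute for it), your stage (ii) remains an unproved assertion.
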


\begin{proof}
We assume that
\begin{equation}
\label{eq:vv'}
\mu_{\varphi}=c_1\mu_{\varphi'}
\end{equation}
and need to prove that $\varphi
=c\varphi'$ for some $c>0$.
Since $\mu_{\varphi}=c_1\mu_{\varphi'}$ we have that
their geodesic laminations supports $|\mu_{\varphi}|$ and $|c_1\mu_{\varphi'}|$
are the same. In other words each leaf of the vertical foliation $\varphi$ is
homotopic to a leaf of the vertical foliation of $\varphi'$ relative their
two endpoints on the unit circle, and vice versa.

Additionally, assume that the corresponding leaves of the vertical foliations
are not only homotopic but that they are equal to each other. In other words,
the vertical foliations of $\varphi$ and $\varphi'$ are equal. If
$z_0\in\mathbb{D}$  is a regular point of both $\varphi$ and $\varphi'$,
denote by $\zeta$ and $\zeta'$ the corresponding natural parameters in a
regular neighborhood $U$ of $z_0$. Then $f =\zeta'\circ \zeta^{-1}$  is a
conformal mapping from $\zeta (U)\subset\mathbb{C}$ onto
$\zeta'(U)\subset\mathbb{C}$ that maps vertical lines onto vertical lines. It
follows then that $\zeta'=a\zeta +b$ for some $a\in\mathbb{R}$. Thus
$d\zeta'^2=a^2d\zeta^2$ and we set $c=a^2$.

We obtained that for each regular point $z_0$ of $\varphi$ and $\varphi'$
there exist a neighborhood $U\ni z_0$ and a constant $c=a^2>0$ such that $\varphi
=c\varphi'$ in $U$. Since the set of regular points of $\varphi$ and
$\varphi'$ is connected and dense in $\mathbb{D}$ then $\varphi =c\varphi'$
in $\mathbb{D}$ and the proof is finished in this case.

It remains to prove that the vertical foliations of $\varphi$ and $\varphi'$
are the same under the assumption that $\mu_{\varphi}=c_1\mu_{\varphi'}$. Let
$\{ S(\beta_i)\}_{i=1}^{\infty}$ be a family of mutually disjoint vertical strips with open
transverse horizontal arcs $\beta_i$ that covers $\mathbb{D}$  up to countably
many vertical trajectories (cf. \cite{Str}). The metric on the horizontal arcs $\beta_i$ is induced by
$|\sqrt{\varphi (z)}dz|$ and we isometrically identify $\beta_i$ with $(0,a_i)$,
where $a_i$ is the length of $\beta_i$. The variable in $(0,a_i)$ is $x$ and
the integration with respect $dx$ corresponds to integration with respect
$|\sqrt{\varphi (z)}dz|$ in $\mathbb{D}$. The arc $(0,a_i)$ is a horizontal
arc in the natural parameter $w=\int  \sqrt{\varphi (z)}dz$ for $\varphi (z)$.

For $\beta_i$, let $S(\beta_i,(0,x))$ be the substrip of $S(\beta_i)$ of
vertical trajectories going through $(0,x)\subset (0,a_i)$. The area of
$S(\beta_i,(0,x))$ is
$$
A^{\varphi}_{\beta_i}(x)=\int_{(0,x)}l^{\varphi}(v_{\beta_i}^{\varphi}(t))dt,
$$
where $v_{\beta_i}^{\varphi}(t)$ is the vertical trajectory of $\varphi$
through the point $t\in (0,x)\subset \beta_i$ and $l^{\varphi}(\cdot )$ is
the length in the $|\sqrt{\varphi (z)}dz|$ metric. The modulus of the
vertical trajectories in $S(\beta_i,(0,x))$ is
$$
M_{\beta_i}^{\varphi}(x)=\int_{(0,x)}\frac{1}{l^{\varphi}(v_{\beta_i}^{\varphi}(t))}dt.
$$

If necessary, we multiply $\varphi'$ by a positive constant such that
$\|c_1\varphi'\|_{L^1}=\|\varphi\|_{L^1}$. Since the supports of
$\mu_{\varphi}$ and $\mu_{\varphi'}$ are the same, to each $S(\beta_i,(0,x))$
there corresponds a vertical strip $\tilde{S}(\beta_i,(0,x))$ of vertical
trajectories $v_{\beta_i}^{\varphi'}(t)$ of $\varphi'$ with the same
endpoints on $\mathbb{S}^1$ as $v_{\beta_i}^{\varphi}(t)$. Note that
$v_{\beta_i}^{\varphi'}(t)$ does not necessarily pass through $t\in\beta_i$
or even intersects $\beta_i$.

Let $A_{\beta_i}^{\varphi'}(x)$ and $M_{\beta_i}^{\varphi'}(x)$ denote the
area of $\tilde{S}(\beta_i,(0,x))$ and the modulus of vertical trajectories
of $\varphi'$ in $\tilde{S}(\beta_i,(0,x))$. We have the following lemma.

\begin{lemma}
\label{lem:weak} Let $\beta_i$ be a transverse horizontal arc to a vertical
strip $S(\beta_i)$ isometrically identified with $(0,a_i)$ in the natural
parameter of $\varphi$. Then for a.e. $x\in (0,a_i)$, we have
$$
\frac{d}{dx}M_{\beta_i}^{\varphi'}(x)\leq\frac{\frac{d}{dx} A_{\beta_i}^{\varphi'}(x)}{[l^{\varphi}(v_{\beta_i}^{\varphi'}(x))]^2},
$$
where $l^{\varphi}(\cdot )$ is the $\varphi$-length and
$v_{\beta_i}^{\varphi'}(x)$ is the horizontal trajectory of $\varphi'$ whose
endpoints agree with the endpoints of $v_{\beta_i}^{\varphi}(x)$.
\end{lemma}

\begin{proof}
For $x\in (0,a_i)$ and small $\eps>0$ we denote
$$L_x(\eps) = \inf \{ l^{\varphi}(v_{\beta_i}^{\varphi'}(t)) \, : \, t \in {[x,x+\eps ]} \}.$$
Note that $L_x(\eps )>0$ for $\eps >0$ small enough by the continuity of $\varphi'$. It is possible that $L_x(\eps )=\infty$ if all vertical trajectories of $\varphi'$ close to $v_{\beta_i}^{\varphi'}(x)$ have infinite $\varphi$-length.
The metric $\rho (z)\equiv L_x(\eps)^{-1}$ is admissible for $\tilde{S}(\beta_i,[x,x+\eps])$, where $L_x(\eps)^{-1}=0$ if $L_x(\eps)=\infty$.

Since, $L_x(\eps)$ is non-increasing  it has a limit as $\eps\to0^+$. In
fact, we have
$$L_x(\eps)\xrightarrow[\eps\to0^+]{}l^{\varphi}(v_{\beta_i}^{\varphi'}(t)).$$ To see this, note first that
$L_x(\eps)\leq l^{\varphi}(v_{\beta_i}^{\varphi'}(x))$ and we only need to
estimate the limit from below.

Assume first that $l^{\varphi}(v_{\beta_i}^{\varphi'}(x))<\infty$. Fix $\delta>0$ and choose points
$\xi_0, \ldots, \xi_k \in v_{\beta_i}^{\varphi'}(x)$, so that
\begin{equation}
\label{eq:1}
\sum_{i=1}^k d_{\varphi}(\xi_i,\xi_{i-1}) \geq l^{\varphi}(v_{\beta_i}^{\varphi'}(x)) -\frac{\delta}{2}
\end{equation}
where $d_{\varphi}(\xi_i,\xi_{i-1}) $ is the distance between $\xi_i$ and $\xi_{i-1}$ in the $\varphi$ metric, i.e. the metric induced by $|\sqrt{\varphi (z)}dz|$.

We want to show that for small $\eta$ the curves
$v_{\beta_i}^{\varphi'}(x+\eta)$ have lengths at least
$l^{\varphi}(v_{\beta_i}^{\varphi'}(x))-\delta$. Since the set of vertical
trajectories $S(\beta_i)$ foliates a neighborhood of
$v_{\beta_i}^{\varphi}(x)$ then
$\tilde{S}(\beta_i)$ must foliate a neighborhood of
$v_{\beta_i}^{\varphi'}(x)$ because the separation property of the vertical trajectories of $\varphi$ and $\varphi'$ is a topological property of their endpoints. By choosing small $\eta >0$, we get that a
subarc of $v_{\beta_i}^{\varphi'}(x+\epsilon )$ is within small euclidean distance to
the subarc of $v_{\beta_i}^{\varphi'}(x)$ between $\xi_0$ and $\xi_k$ for all $0<\epsilon <\eta$. Since
$\varphi$ is continuous, it follows that for $\eta >0$ small enough, each
$v_{\beta_i}^{\varphi'}(x+\epsilon )$ for $\epsilon <\eta$ has points
$\xi_0',\ldots ,\xi_k'$ on the $\varphi$-distance less than
$\frac{\delta}{4k}$ from $\xi_0,\ldots ,\xi_k$, respectively.
Therefore by (\ref{eq:1}) we have
$$
l^{\varphi}(v_{\beta_i}^{\varphi'}(x+\eta)) \geq  \sum_{i=1}^k d_{\varphi}(\xi_i' , \xi_{i-1}') \geq \sum_{i=1}^k \left(d_{\varphi}(\xi_i,\xi_{i-1}) - \frac{\delta}{2k}\right)\geq l^{\varphi}(v_{\beta_i}^{\varphi'}(x))-\delta.
$$
Thus $L_x(\epsilon )\geq l^{\varphi}(v_{\beta_i}^{\varphi'}(x))-\delta$ for
all $\epsilon <\eta$ which implies that $\lim_{\epsilon\to 0^{+}}L_x(\epsilon
)=l^{\varphi} (v_{\beta_i}^{\varphi'}(x))$ because $\delta >0$ is arbitrary.

Assume now that $l^{\varphi}(v_{\beta_i}^{\varphi'}(x))=\infty$. If $L_x(\eps )=\infty$ for some $\eps >0$ then $\lim_{\eps\to 0^{+}}L_x(\eps )=l^{\varphi}(v_{\beta_i}^{\varphi'}(x))$. We consider the case when $L_x(\eps )<\infty$ for all $\eps >0$ and need to prove that for every $M>0$ there exist $\eta >0$ such that $L_x(\eps )\geq M$ for all $\eps <\eta$. Choose points
$\xi_0, \ldots, \xi_k \in v_{\beta_i}^{\varphi'}(x)$, so that
\begin{equation}
\label{eq:2}
\sum_{i=1}^k d_{\varphi}(\xi_i,\xi_{i-1}) \geq M+1.
\end{equation}
where $d_{\varphi}(\xi_i,\xi_{i-1}) $ is the distance between $\xi_i$ and $\xi_{i-1}$ in the $\varphi$-metric. For small $\eta >0$ and all $\eps <\eta$, a
subarc of $v_{\beta_i}^{\varphi'}(x+\eps )$ is within small euclidean distance to
the subarc of $v_{\beta_i}^{\varphi'}(x)$ between $\xi_0$ and $\xi_k$. Since
$\varphi$ is continuous, it follows that for $\eta >0$ small enough, each
$v_{\beta_i}^{\varphi'}(x+\epsilon )$ for $\epsilon <\eta$ has points
$\xi_0',\ldots ,\xi_k'$ on the $\varphi$-distance less than
$\frac{1}{2k}$ from $\xi_0,\ldots ,\xi_k$, respectively.
Therefore by (\ref{eq:2}) we have
$$
l^{\varphi}(v_{\beta_i}^{\varphi'}(x+\eta)) \geq \sum_{i=1}^k \left(d_{\varphi}(\xi_i,\xi_{i-1}) - \frac{1}{4k}\right)\geq l^{\varphi}(v_{\beta_i}^{\varphi'}(x))-1\geq M
$$
and then $\lim_{\eps\to 0}L_x(\eps )=l^{\varphi}(v_{\beta_i}^{\varphi'}(x))$.

Thus for a.e. $x\in (0,a_i)$ we have

\begin{eqnarray*}
\frac{d}{dx}M_{\beta_i}^{\varphi'}(x)
&=&\lim_{\eps\to 0^{+}}
\frac{\m \tilde{S}(\beta_i,[x,x+\eps])}{\eps}\\
&\leq& \limsup_{\eps\to 0^{+}} \frac{A_{\beta_i}^{\varphi'}(x+\eps)-
A_{\beta_i}^{\varphi'}(x)}{\eps L^2_x(\eps)} =
\frac{\frac{d}{dx}A_{\beta_i}^{\varphi'}(x)}
{[l^{\varphi}(v_{\beta_i}^{\varphi'}(x))]^2},
\end{eqnarray*}
and the proof is complete.
\end{proof}

Using the above lemma we establish the next lemma which finishes the proof.

\begin{lemma}
If for every $\beta_i$ and a.e. $x\in (0,a_i)$ we have
$M_{\beta_i}^{\varphi}(x) =c_1M_{\beta_i}^{\varphi'}(x) $ then the vertical
foliations of $\varphi$ and $\varphi'$ are equal.
\end{lemma}

\begin{proof}
Note that $M_{\beta_i}^{\varphi}(x)=\mu_{\varphi}((0,x))$ and $M_{\beta_i}^{\varphi'}(x)=\mu_{\varphi'}((0,x))$, and equation (\ref{eq:vv'}) imply that
$$M_{\beta_i}^{\varphi}(x)=c_1M_{\beta_i}^{\varphi'}(x)$$ for every $x\in (0,a_i)$.
By the previous lemma, by Lebesque's differentiation theorem and by absolute continuity of $M_{\beta_i}^{\varphi}(x)=\int_{(0,x)}\frac{1}{l^{\varphi}(v_{\beta_i}^{\varphi}(t))}dt$
we have for a.e. $x\in (0,a_i)$
\begin{eqnarray}\label{ineq:mod-derivative}
\begin{split}
\frac{\frac{d}{dx}A_{\beta_i}^{\varphi}(x)}{[l^{\varphi}(v_{\beta_i}^{\varphi}(x))]^2}=\frac{1}{[l^{\varphi}(v_{\beta_i}^{\varphi}(x))]^2}\lim_{\eps\to 0}\frac{1}{\eps}\int_x^{x+\eps}l^{\varphi}(v_{\beta_i}^{\varphi}(t))dt =\\
\frac{1}{l^{\varphi}(v_{\beta_i}^{\varphi}(x))}
= \frac{d}{dx}M_{\beta_i}^{\varphi}(x)=  c_1\frac{d}{dx}M_{\beta_i}^{\varphi'}(x)\leq \frac {c_1\frac{d}{dx}A_{\beta_i}^{\varphi'}(x)}{[l^{\varphi}(v_{\beta_i}^{\varphi'}(x))]^2}.
\end{split}
\end{eqnarray}

Since $l^{\varphi}(v_{\beta_i}^{\varphi}(x))\leq
l^{\varphi}(v_{\beta_i}^{\varphi'}(x))$ with equality implying that the two
curves are the same, it follows from (\ref{ineq:mod-derivative}) that for a.e. $x\in (0,a_i)$ we have
\begin{align}
\frac{d}{dx}A_{\beta_i}^{\varphi}(x)\leq c_1\frac{d}{dx}A_{\beta_i}^{\varphi'}(x).
\end{align}

Note that that $A_{\beta_i}^{\varphi}(x)$ is absolutely continuous in $x$, since $A_{\beta_i}^{\varphi}(x)=\int_{(0,x)}l^{\varphi}(v_{\beta_i}^{\varphi}(t))dt$.
Thus
\begin{equation*}
A_{\beta_i}^{\varphi}(x)=\int_{0}^x \frac{d}{dt}A_{\beta_i}^{\varphi}(t) dt \leq \int_0^x c_1\frac{d}{dt}A_{\beta_i}^{\varphi'}(t) dt  \leq
c_1A_{\beta_i}^{\varphi'}(x)
\end{equation*}
for a.e. $x\in (0,a_i)$. Since
$$
\|\varphi\|_{L^1}=\sum_iA_{\beta_i}^{\varphi}(a_i)\leq c_1\sum_iA_{\beta_i}^{\varphi'}(a_i)=\|c_1\varphi'\|_{L^1}
$$
and $\|\varphi\|_{L^1}=\|c_1\varphi'\|_{L^1}$, we necessarily have equality for
each term $\beta_i$ and for a.e. $x\in (0,a_i)$.

Thus $A_{\beta_i}^{\varphi}(x)=c_1A_{\beta_i}^{\varphi'}(x)$ for a.e. $x\in
(0,a_i)$ which implies
$\frac{d}{dx}A_{\beta_i}^{\varphi}(x)=c_1\frac{d}{dx}A_{\beta_i}^{\varphi'}(x)$ for a.e. $x\in (0,a_i)$.

Therefore, by (\ref{ineq:mod-derivative}) 
we have
$$\frac{\frac{d}{dx}A_{\beta_i}^{\varphi}(x)}{[l^{\varphi}(v_{\beta_i}^{\varphi}(x))]^2}\leq
\frac{c_1\frac{d}{dx}A_{\beta_i}^{\varphi'}(x)}{[l^{\varphi}(v_{\beta_i}^{\varphi'}(x))]^2},$$
where the numerators are equal for a.e. $x\in(0,a)$. In particular, $l^{\varphi}(v_{\beta_i}^{\varphi'}(x))\leq l^{\varphi}(v_{\beta_i}^{\varphi}(x))$ and thus 
$l^{\varphi}(v_{\beta_i}^{\varphi'}(x))=l^{\varphi}(v_{\beta_i}^{\varphi}(x))$
for a.e. $x$. By the uniqueness of geodesics in the $\varphi$ metric connecting two boundary points of simply connected domains (cf. \cite{Str}) and since vertical trajectories foliate $\mathbb{D}$, we
obtain that all vertical trajectories of $\varphi$ and $\varphi'$ are the
same.
\end{proof}

The above lemma together with the above finishes the proof of the theorem.
\end{proof}

Given an integrable holomorphic quadratic differential $\varphi$ on the unit
disk, we denote by $\nu_{\varphi}$ the measured lamination whose support $v_{\varphi}$ is
homotopic to the leaves of the vertical foliation of $\varphi$ and the
transverse measure is given by $\int_I |\sqrt{\varphi (z)}dz|$, where $I$
is a horizontal arc intersecting the leaves of the vertical foliation corresponding to
the leaves of $\nu_{\varphi}$. We prove that $\nu_{\varphi}$ is
Thurston bounded.

\begin{proposition}
Let $\varphi$ be an integrable holomorphic quadratic differential on the unit
disk $\mathbb{D}$. Then the vertical foliation measure $\nu_{\varphi}$
defined above is Thurston bounded.
\end{proposition}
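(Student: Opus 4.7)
The plan is to bound $\nu_{\varphi}([a,b]\times[c,d])$ for every box of Liouville measure $\log 2$ by combining the integral formula for $\nu_\varphi$ with the integrability of $\varphi$ and the Thurston-boundedness of $\mu_\varphi$ established in Proposition 1. The key observation is that $\nu_\varphi$, $\mu_\varphi$, and the $\varphi$-area of the strip swept by the relevant vertical trajectories are all integrals with respect to the same measure $d\sigma=|\sqrt{\varphi(z)}\,dz|$ on a transverse horizontal arc $I=I_{[a,b]\times[c,d]}$, with integrands $1$, $1/l(z)$, and $l(z)$ respectively, so a single application of the Cauchy--Schwarz inequality relates them.

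Concretely, I would fix a box $[a,b]\times[c,d]$ with $\mathcal{L}([a,b]\times[c,d])=\log 2$, let $I$ be a (countable union of) transverse horizontal arc(s) meeting each vertical trajectory from $[a,b]$ to $[c,d]$ exactly once, and set $d\sigma=|\sqrt{\varphi(z)}\,dz|$ on $I$. Then
\begin{align*}
\nu_{\varphi}([a,b]\times[c,d]) &= \int_I d\sigma,\\
\mu_{\varphi}([a,b]\times[c,d]) &= \int_I \frac{1}{l(z)}\,d\sigma,
\end{align*}
and passing to the natural parameter one sees that $\int_I l(z)\,d\sigma$ equals the $\varphi$-area of the strip $U_{[a,b]\times[c,d]}$ swept by the vertical trajectories connecting $[a,b]$ and $[c,d]$, hence
$$
\int_I l(z)\,d\sigma = \iint_{U_{[a,b]\times[c,d]}} |\varphi(z)|\,dx\,dy \leq \|\varphi\|_{L^1}.
$$

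Writing $1=\frac{1}{\sqrt{l(z)}}\cdot\sqrt{l(z)}$ and applying Cauchy--Schwarz, I obtain
$$
\nu_{\varphi}([a,b]\times[c,d])^2
= \Bigl(\int_I d\sigma\Bigr)^2
\leq \Bigl(\int_I \frac{1}{l(z)}\,d\sigma\Bigr)\Bigl(\int_I l(z)\,d\sigma\Bigr)
\leq \mu_{\varphi}([a,b]\times[c,d])\cdot\|\varphi\|_{L^1}.
$$
Taking the supremum over boxes of Liouville measure $\log 2$ and invoking Proposition 1 (which gives $\|\mu_\varphi\|_{Th}<\infty$) yields
$$
\|\nu_{\varphi}\|_{Th} \leq \sqrt{\|\mu_{\varphi}\|_{Th}\cdot\|\varphi\|_{L^1}}<\infty,
$$
which is the desired conclusion.

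The only subtlety I anticipate is the case in which some vertical trajectories have infinite $\varphi$-length (so $1/l(z)=0$ on a set of positive $d\sigma$-measure) or zero length at critical configurations; in the former the Cauchy--Schwarz inequality still holds with the convention $1/\infty=0$ and $\infty\cdot 0=0$, and the contribution of such trajectories to $\nu_\varphi$ must actually vanish because the corresponding strip would have infinite $\varphi$-area, contradicting $\|\varphi\|_{L^1}<\infty$; while critical vertical trajectories form a countable, hence $\mu_\varphi$- and $\nu_\varphi$-null, set and may be ignored. Thus the main obstacle is purely bookkeeping about measure-zero exceptional sets in the transverse decomposition into strips $S(\beta_i)$, and the three-way Cauchy--Schwarz identity above does the real work.
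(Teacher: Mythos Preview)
Your proof is correct and is genuinely different from the paper's argument. The paper splits the vertical trajectories at $\varphi$-length $1$: on the set $\{l\geq 1\}$ it bounds $\int dx$ by $\int l\,dx\leq\|\varphi\|_{L^1}$, while on the set $\{l<1\}$ restricted to the box it bounds $\int dx$ by $\int l^{-1}\,dx=\m(\Gamma_v([a,b],[c,d]))\leq\m([a,b],[c,d];\mathbb{D})\leq C$, and adds the two estimates to get $\nu_\varphi([a,b]\times[c,d])\leq\|\varphi\|_{L^1}+C$. You instead interpolate directly with Cauchy--Schwarz, obtaining $\nu_\varphi\leq\sqrt{\mu_\varphi\cdot\|\varphi\|_{L^1}}$ in one stroke. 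Your route is shorter and yields a multiplicative (and scale-correct) bound in place of the paper's additive one; the paper's threshold split is slightly more hands-on and avoids citing Proposition~1, using the raw modulus bound $\m([a,b],[c,d];\mathbb{D})\leq C$ instead. Both approaches ultimately exploit the same three quantities $\int 1$, $\int l$, $\int l^{-1}$ on the transversal. Your handling of the exceptional sets is fine: $\int_I l\,d\sigma\leq\|\varphi\|_{L^1}<\infty$ already forces $l<\infty$ for $\sigma$-a.e.\ point, so there is nothing delicate there.
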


\begin{proof}
Let $\mathcal{V}^{\geq 1}$ be the the set of all vertical trajectories of
$\varphi$ whose $\varphi$-length is $\geq 1$. Let $\mathcal{V}^{< 1}$ be the
the set of all vertical trajectories of $\varphi$ whose $\varphi$-length is
$< 1$. Let $\mathbb{D}^{\geq 1}= \cup_{\gamma \in \mathcal{V}^{\geq 1}}
\gamma$ and $\mathbb{D}^{< 1}= \cup_{\gamma \in \mathcal{V}^{< 1}} \gamma$.

Let $[a,b]\times [c,d]\subset (\mathbb{S}^1\times \mathbb{S}^1)-diag$ be a
box of geodesics with $\mathcal{L}([a,b]\times [c,d])=\log 2$. This implies
that $\frac{1}{C}\leq \m([a,b],[c,d];\mathbb{D})\leq C$ for some $C>1$. Let
$I$ be at most countable union of horizontal arcs of $\varphi$ that
intersects each vertical trajectory of $\varphi$ in exactly one point. Then
we have
$$
\|\varphi\|_{L^1}>\iint_{\mathbb{D}^{\geq 1}}|\varphi (z)|dxdy=\int_{I\cap \mathbb{D}^{\geq 1}}l^{\varphi}(v^{\varphi}(z))dx\geq\int_{I\cap \mathbb{D}^{\geq 1}}dx,
$$
where $x$ is the real part of the natural parameter along $I$.

For a box of geodesics $[a,b]\times [c,d]$, let $\mathcal{V}^{<1}_{[a,b]\times [c,d]}$ be the set of vertical trajectories of length less than $1$ that connects $[a,b]$ and $[c,d]$. Let $I^{<1}_{[a,b]\times [c,d]}$ be the subset of $I$ that intersects only vertical trajectories of $\mathcal{V}^{<1}_{[a,b]\times [c,d]}$.
Then we
have
\begin{equation*}
\begin{split}
C\geq \m([a,b],[c,d];\mathbb{D})\geq \m(\mathcal{V}^{< 1}_{[a,b]\times [c,d]})=\\ \int_{I^{< 1}_{[a,b]\times [c,d]}}\frac{1}{l^{\varphi}(v^{\varphi}(z))}dx\geq \int_{I^{< 1}_{[a,b]\times [c,d]}}dx.
\end{split}
\end{equation*}
Let $I_{[a,b]\times [c,d]}$ be the subset of $I$ that intersects only the vertical trajectories of $\varphi$ that connect $[a,b]$ to $[c,d]$.
Since $\nu_{\varphi} ([a,b]\times [c,d])=\int_{I_{[a,b]\times [c,d]}}dx\leq \|\varphi\|_{L^1}+C$,
we have that $\|\nu_{\varphi}\|_{Th}<\infty$.
\end{proof}

\vskip .2 cm

We can recover the integral $\|\varphi\|_{L^1}$ from $\mu_{\varphi}$ and $\nu_{\varphi}$ by the formula
$$
\|\varphi\|_{L^1}=\int_{\mathbb{S}^1\times \mathbb{S}^1-diag}\frac{d\mu_{\varphi}}{d\nu_{\varphi}}d\mu_{\varphi}.
$$
This immediately gives

\begin{theorem}
\label{thm:injective}
The map from $A(\mathbb{D})$ in $ML_{bdd}(\mathbb{D})\times ML_{bdd}(\mathbb{D})$ given by
$$
\varphi\mapsto (\nu_{\varphi},\mu_{\varphi})
$$
is injective.
\end{theorem}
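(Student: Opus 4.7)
The plan is to derive Theorem \ref{thm:injective} as a quick consequence of the injectivity theorem for $\mathcal{M}$ (Theorem \ref{thm:1-1}) combined with the recovery formula
\[
\|\varphi\|_{L^1}=\int_{\mathbb{S}^1\times\mathbb{S}^1-diag}\frac{d\nu_{\varphi}}{d\mu_{\varphi}}\,d\nu_{\varphi},
\]
stated just before the theorem. So the main preparatory work is to verify this formula, after which injectivity will follow in essentially one line.

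To verify the formula I would fix a countable family of disjoint open vertical strips $\{S(\beta_i)\}$ that covers $\mathbb{D}$ up to countably many singular vertical trajectories (as guaranteed by Strebel), and identify each transverse horizontal arc $\beta_i$ with $(0,a_i)$ via the natural parameter $w=\int\sqrt{\varphi}\,dz$. In these coordinates $\nu_{\varphi}|_{\beta_i}=dx$, while by the integral expression of Proposition \ref{prop:vertical_mod_measure} we have $\mu_{\varphi}|_{\beta_i}=\frac{dx}{l^{\varphi}(v_{\beta_i}^{\varphi}(x))}$. Therefore on the joint support $\frac{d\nu_{\varphi}}{d\mu_{\varphi}}(x)=l^{\varphi}(v_{\beta_i}^{\varphi}(x))$; trajectories of infinite $\varphi$-length form a $\nu_{\varphi}$-null set by integrability of $\varphi$, so they may be ignored. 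Fubini in natural parameter coordinates then gives
\[
\int_{\beta_i}\frac{d\nu_{\varphi}}{d\mu_{\varphi}}\,d\nu_{\varphi}=\int_0^{a_i}l^{\varphi}(v_{\beta_i}^{\varphi}(x))\,dx=\iint_{S(\beta_i)}|\varphi(z)|\,dxdy,
\]
and summing over $i$ (noting that the omitted singular trajectories have zero area) yields the recovery formula.

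With the formula in hand, suppose $\varphi,\varphi'\in A(\mathbb{D})$ satisfy $(\nu_{\varphi},\mu_{\varphi})=(\nu_{\varphi'},\mu_{\varphi'})$. The equality $\mu_{\varphi}=\mu_{\varphi'}$ in particular forces $[\mu_{\varphi}]=[\mu_{\varphi'}]$, so Theorem \ref{thm:1-1} yields $\varphi=c\varphi'$ for some $c>0$ (the case $\varphi'=0$ is trivial, since $\mu_{\varphi}=0$ then forces $\varphi=0$). Applying the recovery formula to both pairs gives $\|\varphi\|_{L^1}=\|\varphi'\|_{L^1}$, while $\varphi=c\varphi'$ gives $\|\varphi\|_{L^1}=c\|\varphi'\|_{L^1}$; hence $c=1$ and $\varphi=\varphi'$. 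The main obstacle is really Theorem \ref{thm:1-1}, whose proof occupies most of Section 6; once the recovery formula is verified, Theorem \ref{thm:injective} becomes a one-line corollary.
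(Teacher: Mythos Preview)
Your proposal is correct and follows exactly the route the paper indicates: the paper's entire proof is the phrase ``This immediately gives'' after the recovery formula, and you have spelled out the intended argument (invoke Theorem~\ref{thm:1-1} to get $\varphi=c\varphi'$, then use the recovery formula to force $c=1$). One minor simplification: once you have $\varphi=c\varphi'$ from Theorem~\ref{thm:1-1}, you can pin down $c$ directly from $\nu_{c\varphi'}=\sqrt{c}\,\nu_{\varphi'}$ without appealing to the recovery formula, though your version is equally valid and stays closer to what the paper highlights.
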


\end{document}